\DeclareMathOperator{\esssup}{ess\,sup}
\DeclareMathOperator{\essinf}{ess\,inf}
\begin{document}

\theoremstyle{plain}
\newtheorem{theorem}{Theorem}[section]
\newtheorem{claim}{Claim}
\newtheorem{lemma}[theorem]{Lemma}
\newtheorem{proposition}[theorem]{Proposition}
\newtheorem{corollary}[theorem]{Corollary}

\theoremstyle{remark}
\newtheorem{example}{Example}[section]
\newtheorem{remark}{Remark}
\theoremstyle{definition}
\newtheorem{definition}[theorem]{Definition}
\hfuzz5pt 


\newcommand{\gt}{\tilde{g}}
\newcommand{\R}{\mathbb{R}}
\newcommand{\Z}{\mathbb{Z}}
\newcommand{\N}{\mathbb{N}}
\newcommand{\Zt}{\mathbb{Z}^2}
\newcommand{\Zd}{\mathbb{Z}^d}
\newcommand{\Ztd}{0\mathbb{Z}^{2d}}
\newcommand{\Rt}{\R^2}
\newcommand{\Rtd}{\R^{2d}}
\newcommand{\Zp}{Z_p}
\newcommand{\al}{\alpha}
\newcommand{\be}{\beta}
\newcommand{\om}{\omega}
\newcommand{\Om}{\Omega}
\newcommand{\ga}{\gamma}
\newcommand{\de}{\delta}
\newcommand{\la}{\lambda}
\newcommand{\La}{\Lambda}
\newcommand{\ala}{\la^\circ}
\newcommand{\aLa}{\La^\circ}
\newcommand{\nat}{\natural}
\newcommand{\G}{\mathcal{G}}
\newcommand{\A}{\mathcal{A}}
\newcommand{\V}{\mathcal{V}}
\newcommand{\M}{\mathcal{M}}
\newcommand{\MV}{\mathcal{MV}}
\newcommand{\MP}{\mathcal{MP}}
\newcommand{\Sp}{\mathcal{S}}
\newcommand{\W}{\mathcal{W}}
\newcommand{\Hp}{\mathcal{H}}
\newcommand{\ka}{\kappa}
\newcommand{\cast}{\circledast}
\newcommand{\id}{\mbox{Id}}
\newcommand{\by}{\mathbf{Y}}
\newcommand{\bx}{\mathbf{X}}
\newcommand{\bz}{\mathbf{Z}}
\newcommand{\bn}{\mathbf{N}}
\newcommand{\bv}{\mathbf{v}}
\newcommand{\bu}{\mathbf{u}}
\newcommand{\bA}{\mathbf{A}}
\newcommand{\bC}{\mathbf{C}}
\newcommand{\bD}{\mathbf{D}}
\newcommand{\bh}{\mathbf{h}}
\newcommand{\bff}{\mathbf{f}}
\newcommand{\bH}{\mathbf{H}}
\newcommand{\bV}{\mathbf{V}}
\newcommand{\bX}{\mathbf{x}}
\newcommand{\byy}{\mathbf{y}}
\newcommand{\bZ}{\mathbf{z}}
\newcommand{\bt}{\mathbf{t}}
\newcommand{\bs}{\mathbf{\sigma}}
\newcommand{\bI}{\mathbf{I}}
\newcommand{\Gn}{\mathcal{G}(\mathbf{g},\mathbf{b})}
\newcommand{\lo}{{\ell^1}}
\newcommand{\Lo}{{L^1}}
\newcommand{\Lt}{{L^2}}
\newcommand{\SO}{{S_0}}
\newcommand{\SOc}{{S_{0,c}}}
\newcommand{\Lp}{{L^p}}
\newcommand{\Los}{{L^1_s}}
\newcommand{\WCl}{{W(C_0,\ell^1)}}
\newcommand{\lt}{{\ell^2}}

\newcommand{\conv}[2]{{#1}\,\ast\,{#2}}
\newcommand{\twc}[2]{{#1}\,\nat\,{#2}}
\newcommand{\mconv}[2]{{#1}\,\cast\,{#2}}
\newcommand{\set}[2]{\Big\{ \, #1 \, \Big| \, #2 \, \Big\}}
\newcommand{\inner}[2]{\langle #1,#2\rangle}
\newcommand{\innerBig}[2]{\Big \langle #1,#2 \Big \rangle}
\newcommand{\dotp}[2]{ #1 \, \cdot \, #2}

\newcommand{\Zpd}{\Zp^d}
\newcommand{\I}{\mathcal{I}}
\newcommand{\J}{\mathcal{J}}
\newcommand{\Zq}{Z_q}
\newcommand{\Zqd}{Zq^d}
\newcommand{\Zak}{\mathcal{Z}_{a}}
\newcommand{\C}{\mathbb{C}}
\newcommand{\F}{\mathcal{F}}

\newcommand{\convL}[2]{{#1}\,\ast_L\,{#2}}
\newcommand{\abs}[1]{\lvert#1\rvert}
\newcommand{\absbig}[1]{\big\lvert#1\big\rvert}
\newcommand{\absBig}[1]{\Big\lvert#1\Big\rvert}
\newcommand{\scp}[1]{\langle#1\rangle}
\newcommand{\norm}[1]{\lVert#1\rVert}
\newcommand{\normmix}[1]{\lVert#1\rVert_{\ell^{2,1}}}
\newcommand{\normbig}[1]{\big\lVert#1\big\rVert}
\newcommand{\normBig}[1]{\Big\lVert#1\Big\rVert}


\begin{frontmatter}

\title{Nonstationary Gabor Frames - Existence and Construction}

\author{Monika D\"orfler}
\ead{monika.doerfler@univie.ac.at}

\author{Ewa Matusiak\corref{cor1}}
\ead{ewa.matusiak@univie.ac.at}

\cortext[cor1]{Corresponding author}

\address{Department of Mathematics, NuHAG, University of Vienna, Austria}


\begin{abstract}
Nonstationary Gabor frames were recently introduced in adaptive signal analysis. They  represent a natural generalization of classical Gabor frames by allowing for adaptivity of windows and lattice in either time or frequency. In this paper we show a general existence result for this family of frames. We then  give a perturbation result for nonstationary Gabor frames and  construct nonstationary Gabor frames with non-compactly supported windows from a related painless nonorthogonal expansion.  Finally, the  theoretical  results are illustrated by two examples of practical relevance.

\end{abstract}

\begin{keyword}
adaptive representations \sep nonorthogonal expansions \sep irregular Gabor frames \sep existence 
\end{keyword}

\end{frontmatter}


\section{Introduction}\label{sec:intro}
The principal idea of Gabor frames was introduced in~\cite{ga46} with the aim to
represent signals in a time-frequency localized manner. Since the work of Gabor
himself, a lot of research has been done on the topic of atomic time-frequency
representation. While it turned out that the original model proposed by Gabor
does not yield stable representations in the sense of
frames~\cite{ch03,dagrme86,dusc52}, the existence of Gabor frames was first
established  in the so called painless case,~\cite{dagrme86}, which requires the
use of compactly supported analysis windows. 
The existence of Gabor frames in more general situations was
proved later~\cite{rosh97,wa92} and the proof often uses an argument invoking
the invertibility of diagonally dominant matrices.

Various irregular and adaptive versions of Gabor frames have been introduced over the years, cf.~\cite{alcamo04-1,chfazo01,faza95,suzh02}.  In these approaches, the irregularity usually concerns either the sampling set, which is allowed to deviate from a lattice, or the window, which is allowed to be modified. In~\cite{alcamo04-1}, varying windows as well as irregular sampling points are allowed, however, existence of a local frame is assumed, from which a global frame is constructed. \\
 Nonstationary Gabor frames give up the
strict regularity of the classical Gabor setting, but, as opposed to irregular
frames,  maintain enough structure to guarantee  efficient implementation
and,  possibly approximate, efficient reconstruction. In analogy to the
classical, regular case~\cite{dagrme86}, painless nonstationary Gabor frames
were introduced in~\cite{badohojave11}, where the principal idea is  described and
illustrated in detail. The construction of painless nonstationary  Gabor frames  is similar to, but more flexible than  the construction of
windowed modified  cosine transforms and other lapped transforms~\cite{ma92-2,maxi01} that allow for
adaptivity of the window length.  An efficient and perfectly invertible constant-Q transform was recently
introduced using nonstationary Gabor transforms~\cite{dohove11}. In this and similar situations, 
redundancy of the transform is crucial,  since non-redundant versions of the constant-Q transform lead to dyadic wavelet
transforms, which are often inappropriate  for audio signal processing. \\
Redundancy allows for good localization of
both the analysis and synthesis windows, and their respective Fourier
transforms and often promote sparse representations in
adaptive processing.

Painless non-orthogonal expansions can only be devised if the involved analysis windows are  either
compactly supported or band limited. This requirement may sometimes be too restrictive. For instance, one may be interested in designing  frequency-adaptive nonstationary Gabor frames with windows that are compactly supported in time, i.e. can be implemented as FIR filters,~cf.~\cite{evdoma12} and lend themselves to  real-time implementation,~cp.~\cite{dogrhove12}.

The present contribution addresses the case of nonstationary Gabor frames with more general windows than used in the painless case.
In Theorem~\ref{thm:nonstationary_frames_1}, we derive the existence of nonstationary Gabor frames directly from a
generalized Walnut representation: under mild uniform decay conditions on all windows involved, we show an
existence result of nonstationary Gabor frames in parallel to the result given
in~\cite{wa92} for regular Gabor frames, also
compare~\cite[Theorem~6.5.1]{gr01}. Note that the existence of a different class of nonstationary Gabor frames, the {\it quilted Gabor frames},~\cite{do11}, was recently proved in the general context of spline type spaces in the remarkable paper~\cite{ro11}.
 
This paper is organized as follows. In the next section, we introduce notation and state some auxiliary results. In Section~\ref{MainSec}, we first define nonstationary Gabor frames and recall known results for the painless case. In Section~\ref{WalnutSec} a Walnut representation and a corresponding bound of the frame operator in the general setting is derived and Section~\ref{Se:Exis} provides the existence  of nonstationary Gabor frames. 
In Section~\ref{Se:apnGF} we pursue two  basic approaches for  the construction of nonstationary Gabor frames .
 Using tools from the theory of  perturbation of  frames, we construct
nonstationary frames from  an existing  frame in Proposition~\ref{thm:nonstationary_frames}.
In Corollary~\ref{cor:nonstationary_frames_2} we design  nonstationary Gabor frames by exploiting knowledge about a related painless  frame, to obtain  "almost painless nonstationary Gabor frames". 
 In Section~\ref{Se:Ex} we provide examples based  on the two introduced construction principles.

\section{Notation and Preliminaries}

Given a non-zero function $g\in\Lt(\R)$, let $g_{k,l}(t)= M_{bl} T_{ak} g(t) := e^{2\pi i bl t} g(t-ak)$. $M_{bl}$ is a modulation operator, or frequency shift, and $T_{ak}$ is a time shift.

The set $\mathcal{G}(g,a,b) = \{g_{k,l}\,:\, k,l\in\Z \}$ is called a Gabor system for any real, positive $a,b$. $\mathcal{G}(g,a,b)$ is a Gabor frame for $\Lt(\R)$, if there exist frame bounds $0<A \leq B < \infty$ such that for every $f\in\Lt(\R)$ we have
\begin{equation}\label{Eq:framecond}
A \norm{f}_2^2 \leq \sum_{k,l\in\Z} \abs{\inner{f}{g_{k,l}}}^2 \leq B \norm{f}_2^2\,.
\end{equation}

To every Gabor system,  we associate the analysis operator $C_g$ given by $(C_g f)_{k,l} = \inner{f}{g_{k,l}}$, and the synthesis operator $U_\ga=C_\ga^\ast$, given by $U_\ga c = \sum_{k,l\in\Z} c_{k,l} \ga_{k,l}$ for $c\in\ell^2$. The operator $S_{g,\ga}$ associated to $\mathcal{G}(g,a,b)$ and $\mathcal{G}(\ga,a',b')$, where $S_{g,\ga}=U_\ga C_g$ reads
\begin{equation*}
S_{g,\ga} f = \sum_{k,l\in\Z} \inner{f}{g_{k,l}} \ga_{k,l}\,. 
\end{equation*}
The inequality \eqref{Eq:framecond} is equivalent to the invertibility and boundedness of the frame operator $S_{g,g}$ of $\mathcal{G}(g,a,b)$.

The analysis operator is the sampled short-time Fourier transform (STFT). For a fixed window $g\in\Lt(\R)$, the STFT of $f\in\Lt(\R)$ is
\begin{equation*}
V_g f (x,\om) = \int_{\R} f(t) e^{-2\pi i \om t} \overline{g(t-x)}\,dt = 
\inner{f}{M_\om T_x g}\,.
\end{equation*}
Setting  $(x,\om) = (ak,bl)$, leads to $V_g f (ak,bl) = (C_g f)_{k,l}$. 

When working with irregular grids,  we assume that the sampling points form a separated set: a set of sampling points $\{a_k\,:\, k\in\mathbb{Z}\}$ is called $\delta$-separated, if  $\abs{a_k-a_m} > \delta$ for 
 $a_k$, $a_m$, whenever $k\neq m$.
$\chi_{I}$ will denote the characteristic function of the interval $I$.

A convenient class of window functions for time-frequency analysis on $\Lt(\R)$ is the Wiener space.
\begin{definition}
A function $g\in L^{\infty}(\R)$ belongs to the Wiener space
$W(L^{\infty},\ell^1)$ if
\begin{equation*}
\norm{g}_{W(L^{\infty},\ell^1)} := \sum_{k\in\Z}
\mbox{ess sup}_{t\in Q} \abs{g(t+k)} < \infty\,,\quad Q=[0,1]\,.
\end{equation*}
\end{definition}

For $g\in W(L^{\infty},\ell^1)$ and $\delta > 0$ we have \cite{gr01}
\begin{equation}\label{eq:wiener_norm}
\esssup_{t\in\R} \sum_{k\in\Z} \abs{g(t-\delta k)} \leq (1+\delta^{-1})
\norm{g}_{W(L^{\infty},\ell^1)}\,.
\end{equation}

In dealing with polynomially decaying windows, we will repeatedly use the
following lemma.

\begin{lemma}\label{lem:estimates}
For $p>1$ the following estimates hold: 
\begin{itemize}
\item[(a)] Let $\delta >0$, then
\begin{equation*}
\sum_{k=1}^{\infty} (1+ \delta k)^{-p}
\leq (1+\delta)^{-p}(\delta^{-1}+ p)(p-1)^{-1}\,.
\end{equation*}
\item[(b)] Let $\{ a_k\,:\,k\in\Z \} \subset \R$ be
a $\delta-$separated set. Then
\begin{equation*}
\esssup_{t\in\R} \sum_{k\in\Z} (1+\abs{t-a_k})^{-p} \leq 2\left ( 1 +
(1+\delta)^{-p}(\delta^{-1}+ p)(p-1)^{-1} \right )\,.
\end{equation*}
\end{itemize}
\end{lemma}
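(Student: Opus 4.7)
The plan is to prove the two parts separately, with (b) reducing to (a) through a simple partition-of-$\Z$ argument.

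For part (a), the natural tool is the integral test. Since $x \mapsto (1+\delta x)^{-p}$ is positive and decreasing on $[0,\infty)$, monotonicity yields
\begin{equation*}
\sum_{k=1}^{\infty}(1+\delta k)^{-p} \leq (1+\delta)^{-p} + \int_{1}^{\infty}(1+\delta x)^{-p}\,dx.
\end{equation*}
The substitution $u = 1+\delta x$ evaluates the integral in closed form to $(1+\delta)^{1-p}/[\delta(p-1)]$. Factoring $(1+\delta)^{-p}$ out of the resulting right-hand side gives $(1+\delta)^{-p}\left[1+\frac{1+\delta}{\delta(p-1)}\right]$, and a short algebraic simplification collapses the bracket to $(\delta p+1)/[\delta(p-1)] = (\delta^{-1}+p)/(p-1)$, reproducing the claimed constant exactly.

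For part (b), I would fix an arbitrary $t \in \R$ (the esssup being handled pointwise) and split the index set into $I = \{k : a_k \geq t\}$ and $J = \{k : a_k < t\}$. Enumerate $I$ in increasing order as $a_{k_1} < a_{k_2} < \cdots$; the strict $\delta$-separation $a_{k_{j+1}} > a_{k_j} + \delta$ combined with $a_{k_1} \geq t$ gives inductively $a_{k_j} - t \geq (j-1)\delta$, whence
\begin{equation*}
\sum_{k \in I}(1+|t-a_k|)^{-p} \leq \sum_{j=1}^{\infty}\bigl(1+(j-1)\delta\bigr)^{-p} = 1 + \sum_{j=1}^{\infty}(1+j\delta)^{-p}.
\end{equation*}
Enumerating $J$ in decreasing order and repeating the argument yields the same bound, and applying part (a) to the tail sum converts the right-hand side to $1 + (1+\delta)^{-p}(\delta^{-1}+p)(p-1)^{-1}$. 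Adding the contributions from $I$ and $J$ produces the factor of $2$ and the advertised bound, uniformly in $t$.

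There is no real obstacle here beyond careful bookkeeping. The two points requiring attention are isolating the (possibly zero-distance) $j=1$ term before shifting the index to apply part (a), and carrying out the algebraic simplification in (a) in a way that reproduces the precise factor $(\delta^{-1}+p)(p-1)^{-1}$ rather than a looser constant of the same order.
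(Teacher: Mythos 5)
Your proposal is correct and follows essentially the same route as the paper: part (a) by comparison of the series with $\int_1^\infty(1+\delta x)^{-p}\,dx$ followed by the same algebraic simplification of the constant, and part (b) by using $\delta$-separation to dominate the points on each side of $t$ by a $\delta$-spaced arithmetic progression, yielding the factor $2$ and reducing to (a). The only cosmetic difference is that the paper encodes the separation via the decomposition $t-a_k=\delta n_k+x_k$ with distinct $n_k$, whereas you enumerate the points on either side of $t$ directly; the estimates are identical.
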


\begin{proof}
To show (a) we write
\begin{equation*}
\sum_{k=1}^{\infty} (1+\delta k)^{-p} = (1+\delta)^{-p} + \sum_{k=2}^{\infty}
(1+\delta k)^{-p} = (1+\delta)^{-p} + \sum_{k=2}^{\infty} \int_{[0,1]+k}
(1+\delta k)^{-p} \,dt\,.
\end{equation*}
for $t\in [k,k+1]$, we have $\delta t \leq \delta (k+1)$ which implies that
$1+\delta(t-1) \leq 1+ \delta k$. Therefore,
\begin{align*}
\sum_{k=2}^{\infty} \int_{[0,1]+k} (1+\delta k)^{-p} \,dt
&\leq \sum_{k=2}^{\infty} \int_{[0,1]+k} (1+\delta (t-1))^{-p} \, dt=
\int_{2}^{\infty} (1+\delta (t-1))^{-p}\,dt \nonumber \\
&= (1+\delta)^{-p+1} \delta^{-1}(p-1)^{-1}\,,
\end{align*}
and the estimate follows.

To prove (b), fix $t\in\R$. Since $\abs{a_k - a_l} > \delta$ for
$k\neq l$, each interval of length $\delta$ contains at most one point
$t-a_k$, $k\in\Z$. Therefore we may write $t-a_k = \delta n_k + x_k$ for unique
$n_k\in\Z$ and $x_k\in[0,\delta)$, and by the choice of $\delta$, we have
$n_k\neq n_l$ for $k\neq l$. We assume, without loss of generality, that
$n_k=0$ for $k=0$, and we find that
\begin{align*}
\sum_{k\in\Z} (1+\abs{t-a_k})^{-p} &= \sum_{k\in\Z} (1+\abs{\delta n_k +
x_k})^{-p} \nonumber \\
&\leq 1 + \sum_{k\in\Z\,;\, n_k > 0} (1+\delta n_k +x_k)^{-p} +
\sum_{k\in\Z\,;\, n_k > 0} (1+\delta n_k - x_k)^{-p} \nonumber \\
&\leq 1 + \sum_{k\in\Z\,;\, n_k > 0} (1+\delta n_k)^{-p} + \sum_{k\in\Z\,;\, n_k
> 0} (1+\delta n_k - \delta)^{-p} \nonumber \\
&\leq 1+ \sum_{k=1}^{\infty} (1+\delta k)^{-p} + \sum_{k=1}^{\infty}
(1+\delta (k-1))^{-p} \nonumber \\
&= 2 \left ( 1 + \sum_{k=1}^{\infty} (1+\delta k)^{-p} \right ) \leq
2\left ( 1 + (1+\delta)^{-p}(\delta^{-1}+ p)(p-1)^{-1} \right )\,.
\end{align*}
The last expression is independent of $t$, and the claim follows.
\end{proof}

\begin{remark}\label{rem:estimate}
When the set $\mathcal{A} = \{ a_k\,:\,k\in\Z \} \subset \R$ is relatively $\delta-$separated, meaning
\begin{equation*}
\mbox{rel}(\mathcal{A}) := \max_{t\in\R}\,\# \{ \mathcal{A} \cap ([0,\delta] + t)\} < \infty\,,
\end{equation*}
then the estimate (b) in Lemma~\ref{lem:estimates} becomes
\begin{equation*}
\esssup_{t\in\R} \sum_{k\in\Z} (1+\abs{t-a_k})^{-p} \leq 2\, \mbox{rel}(\mathcal{A})\, \left ( 1 + (1+\delta)^{-p}(\delta^{-1}+ p)(p-1)^{-1} \right )\,.
\end{equation*}
Notice, that for a separated set, $\mbox{rel}(\mathcal{A}) = 1$.
\end{remark}

\section{Nonstationary Gabor frames}\label{MainSec}

Nonstationary Gabor systems provide a  
generalization of the classical Gabor systems of time-frequency-shifted versions of a single window function.
\begin{definition}
Let  $\mathbf{g}=\{g_k\in\Lt(\R): \;k\in\Z\}$ be  a set of window
functions  and let $\mathbf{b} = \{b_k: \;k\in\Z\}$  be a corresponding sequence of frequency-shift
parameters. Set $g_{k,l}  = M_{b_k l} g_k$. Then, the set 
\begin{equation*}
\mathcal{G}(\mathbf{g},\mathbf{b})=\{g_{k,l}:\; k,l\in\mathbb{Z}\}
\end{equation*}
is called a {\it nonstationary Gabor system}.
\end{definition}
Note that, conceptually, we assume that the windows $g_k$ are centered at points $\{a_k\, :\, k\in \Z \}$, in direct generalization of the regular case, where $g_k ( t) = g(t-ak)$ for some time-shift parameter $a$. In this sense, we have a two-fold generalization: the sampling points can be irregular {\it and} the windows can change for every sampling point. 
We are interested in 
conditions under which a nonstationary Gabor system forms a frame. We first recall the case of
nonstationary Gabor frames with compactly supported windows,
see~\cite{badohojave11} and \url{http://www.univie.ac.at/nonstatgab/} for
further information.

\subsection{Compactly supported windows: the painless case}\label{subsec:compactly}

Based on the  support length of the windows $g_k$, we can easily determine 
frequency-shifts parameters $b_k$, for which we obtain a frame. The following
result is the nonstationary version of the result given in~\cite{dagrme86}.

\begin{proposition}[\cite{badohojave11}]\label{BDF}
Let $\mathbf{g}=\{g_k\in\Lt(\R): \;k\in\Z\}$ be a collection of compactly supported functions with
$\abs{\mbox{supp }g_k} \leq 1/b_k$. Then $\mathcal{G}(\mathbf{g},\mathbf{b})$ is
a frame for $\Lt(\R)$ if there exist constants $A>0$
and $B<\infty$ such that
\begin{equation*}
A \leq G_0(t) = \sum_{k\in \Z} b_k^{-1} \abs{g_k(t)}^2 \leq B\,\mbox{ a.e. }.
\end{equation*}
The dual atoms are then $\ga_{k,l}(t) = M_{lb_k} G_0^{-1}(t) g_k(t)$.
\end{proposition}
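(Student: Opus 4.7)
My plan is to diagonalize the frame operator $S_{\mathbf{g},\mathbf{g}}$ explicitly, showing it equals multiplication by $G_0$. Both the frame inequality and the formula for the dual atoms then follow immediately. The key observation is that the compact support condition $\abs{\mbox{supp } g_k}\le 1/b_k$ lets us recognize the coefficients $(\inner{f}{g_{k,l}})_{l\in\Z}$ as genuine Fourier coefficients on an interval of length $1/b_k$.

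Fix $k$ and let $I_k$ be an interval of length $1/b_k$ containing $\mbox{supp } g_k$. The system $\{b_k^{1/2} e^{2\pi i b_k l t}\}_{l\in\Z}$ is an orthonormal basis of $L^2(I_k)$. Since $g_k$ vanishes outside $I_k$,
\begin{equation*}
\inner{f}{g_{k,l}}=\int_{I_k} f(t)\overline{g_k(t)}\,e^{-2\pi i b_k l t}\,dt
\end{equation*}
is (up to the factor $b_k^{-1/2}$) the $l$-th Fourier coefficient of $f\overline{g_k}|_{I_k}$. Provided $f g_k\in\Lt(\R)$, Parseval on $L^2(I_k)$ then yields
\begin{equation*}
\sum_{l\in\Z}\abs{\inner{f}{g_{k,l}}}^2 = b_k^{-1}\int_{\R}\abs{f(t)}^2\abs{g_k(t)}^2\,dt.
\end{equation*}
The required integrability is forced by the upper bound $G_0\le B$, since $\int\abs{f}^2\abs{g_k}^2\le b_k\int\abs{f}^2 G_0\le b_k B\norm{f}_2^2$.

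Summing over $k$ gives $\sum_{k,l}\abs{\inner{f}{g_{k,l}}}^2=\int_{\R}\abs{f(t)}^2 G_0(t)\,dt$, so the hypothesis $A\le G_0\le B$ is equivalent to the two-sided frame bound \eqref{Eq:framecond}. Polarizing (or repeating the Parseval computation bilinearly) shows that
\begin{equation*}
\inner{S_{\mathbf{g},\mathbf{g}} f}{h}=\int_{\R} G_0(t)\,f(t)\,\overline{h(t)}\,dt\qquad\text{for all } f,h\in\Lt(\R),
\end{equation*}
i.e.\ $S_{\mathbf{g},\mathbf{g}}$ is pointwise multiplication by $G_0$. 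Since $A\le G_0\le B$ a.e., $S^{-1}_{\mathbf{g},\mathbf{g}}$ is multiplication by $G_0^{-1}$, and the canonical dual atoms are
\begin{equation*}
\gamma_{k,l}=S^{-1}_{\mathbf{g},\mathbf{g}} g_{k,l}=G_0^{-1}\cdot M_{b_k l} g_k = M_{b_k l}\bigl(G_0^{-1} g_k\bigr),
\end{equation*}
where the last equality uses that pointwise multiplication commutes with the modulation $M_{b_k l}$.

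There is no serious obstacle here: the proof is essentially the bookkeeping of the classical painless argument of~\cite{dagrme86}, adapted to non-constant $b_k$ and $k$-dependent windows. The only technical point is verifying that $f\overline{g_k}\in L^2(I_k)$ so Parseval applies; as noted, this is automatic from the upper bound on $G_0$, and in any case one can first run the computation on a dense subspace and extend by the resulting norm bound.
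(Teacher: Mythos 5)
Your proof is correct and takes essentially the same route as the paper: the paper deduces the proposition by specializing its Walnut representation (Proposition~\ref{prop:non_walnut}) --- itself obtained from the Fourier expansion of $f\bar{g}_k$ over a period of length $b_k^{-1}$ --- to the case where $\overline{g_k(t-lb_k^{-1})}\,g_k(t)=0$ for $l\neq 0$, so that only the diagonal term $G_0\cdot f$ survives, and your Parseval computation is precisely the quadratic-form version of that same orthonormal expansion. The only point worth flagging is that you (like the paper) read $\abs{\mbox{supp }g_k}\leq 1/b_k$ as ``supported in an interval of length at most $1/b_k$,'' which is the intended meaning and is needed for the off-diagonal cancellation; a support of small measure but large diameter would not suffice.
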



\begin{remark}
An analogous theorem holds for bandlimited functions $g_k$.
\end{remark}
The above theorem follows from the fact that the frame operator associated to
the collection of atoms described in the theorem can be written as
\begin{equation*}
S_{g,g}f(t) = \sum_{k\in\Z} b_k^{-1} \abs{g_k(t)}^2 f(t)\,\mbox{ a.e. }.
\end{equation*}
The diagonality of the frame operator in the painless case is derived from a
generalized Walnut representation for the frame operator $S_{g,g}$ of  nonstationary
Gabor frames. In the next section we will see that this representation immediately implies diagonality of $S_{g,g}$ under the assumptions of Proposition~\ref{BDF}.


\subsection{A Walnut representation for nonstationary Gabor Frames}~\label{WalnutSec}

Let us now consider nonstationary Gabor systems $\Gn$ and $\mathcal{G}(\mathbf{\ga},\mathbf{b})$, with  all  windows $g_k$ and $\ga_k$
in $W(L^\infty, \ell^1 )$. The operator associated to $\Gn$ and $\mathcal{G}(\mathbf{\ga},\mathbf{b})$ reads
\begin{equation}\label{Eq:FOnon}
S_{g,\ga}f = \sum_{k,l\in\Z} \inner{f}{M_{lb_k}g_k} M_{lb_k}\ga_k\,.
\end{equation}

\begin{proposition}\label{prop:non_walnut}
The operator $S_{g,\ga}$ in  \eqref{Eq:FOnon} admits a Walnut representation
\begin{equation}\label{eq:walnut_general}
S_{g,\ga}f = \sum_{k,l\in\Z} G^{g,\gamma}_{k,l} \cdot T_{lb_k^{-1}}f\,,\quad \mbox{where} \quad
G^{g,\gamma}_{k,l}(t) = b_k^{-1} \overline{g_k(t-lb_k^{-1})} \ga_k(t) \,,
\end{equation}
for $f\in\Lt(\R)$. Moreover, its operator norm can be bounded by
\begin{align}
\abs{\inner{S_{g,\ga} f}{h}} &\leq \Big ( \sup_{k\in\Z}
(1+b_k^{-1})\norm{\ga_k}_{W(L^{\infty},\ell^1)} \Big )^{1/2} \Big (\esssup_{t\in\R} \sum_{k\in\Z}
\abs{g_k(t)} \Big )^{1/2} \nonumber \\
& \cdot \Big ( \sup_{k\in\Z} (1+b_k^{-1})\norm{g_k}_{W(L^{\infty},\ell^1)} \Big )^{1/2} 
\Big (\esssup_{t\in\R} \sum_{k\in\Z} \abs{\ga_k(t)} \Big )^{1/2} 
\norm{f}_2 \norm{h}_2 \label{eq:bound_general}
\end{align}
for all $f,h\in\Lt(\R)$.
\end{proposition}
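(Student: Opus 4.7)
The plan is twofold: first derive~\eqref{eq:walnut_general} fiberwise in the index $k$ via Poisson summation, and then extract~\eqref{eq:bound_general} by a double Cauchy--Schwarz argument combined with~\eqref{eq:wiener_norm}.

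For~\eqref{eq:walnut_general} I would fix $k$ and examine the partial operator $T_kf:=\sum_{l\in\Z}\inner{f}{M_{lb_k}g_k}M_{lb_k}\gamma_k$. Writing out the inner product exhibits $T_kf(t)$ as $\gamma_k(t)$ times the sampled Fourier series $\sum_l\widehat{F_k}(lb_k)e^{2\pi ilb_kt}$, where $F_k(s)=f(s)\overline{g_k(s)}$. Poisson summation with period $b_k^{-1}$ converts this into the periodization $b_k^{-1}\gamma_k(t)\sum_lF_k(t-lb_k^{-1})$, which is exactly $\sum_l G^{g,\gamma}_{k,l}(t)\,T_{lb_k^{-1}}f(t)$. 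Summing over $k$ yields~\eqref{eq:walnut_general}. Poisson summation is applicable because $g_k,\gamma_k\in W(L^\infty,\ell^1)$ yield enough decay on a dense subspace of $\Lt(\R)$ (e.g.\ Schwartz functions), and the identity extends to $\Lt(\R)$ once the norm bound is in place.

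For~\eqref{eq:bound_general} I would test $\inner{S_{g,\gamma}f}{h}$ against $h\in\Lt(\R)$, pass absolute values inside, and split the weight $b_k^{-1}\abs{g_k(t-lb_k^{-1})\gamma_k(t)}$ symmetrically between $|f|^2$- and $|h|^2$-integrals via one Cauchy--Schwarz in the integral followed by a second Cauchy--Schwarz over $(k,l)$. This produces an upper bound of the form $U(f)^{1/2}U(h)^{1/2}$, with $g_k,\gamma_k$ swapped in the $h$-factor, where
\begin{equation*}
U(f) \;=\; \sum_{k,l}\int b_k^{-1}\abs{g_k(t-lb_k^{-1})}\,\abs{\gamma_k(t)}\,\abs{f(t-lb_k^{-1})}^2\,dt\,.
\end{equation*}
After the substitution $u=t-lb_k^{-1}$ and Fubini--Tonelli, the inner sum $b_k^{-1}\sum_l\abs{\gamma_k(u+lb_k^{-1})}$ is of the form covered by~\eqref{eq:wiener_norm} with $\delta=b_k^{-1}$ and is therefore majorized by $(1+b_k^{-1})\norm{\gamma_k}_{W(L^\infty,\ell^1)}$. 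Pulling out $\sup_k(1+b_k^{-1})\norm{\gamma_k}_{W(L^\infty,\ell^1)}$ and the essential supremum of $\sum_k\abs{g_k(u)}$ bounds $U(f)$ by the first product in~\eqref{eq:bound_general} times $\norm{f}_2^2$; symmetric treatment of $U(h)$ delivers the remaining product.

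The principal nuisance will be the bookkeeping of the factors $b_k^{-1}$ across the Poisson summation and the subsequent change of variables, and justifying every interchange of sum and integral. The uniform Wiener-norm assumption embedded in the right-hand side of~\eqref{eq:bound_general}, together with $\sup_k(1+b_k^{-1})\norm{g_k}_{W(L^\infty,\ell^1)}<\infty$, supplies absolute convergence at each step, so no deeper analytic obstacle appears.
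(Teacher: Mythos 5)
Your proposal is correct and follows essentially the same route as the paper: Poisson summation applied to the $b_k^{-1}$-periodic fiber $\sum_l\widehat{(f\bar g_k)}(lb_k)e^{2\pi i lb_k t}$ for the Walnut representation, then a double Cauchy--Schwarz argument with a change of variables and the amalgam estimate~\eqref{eq:wiener_norm} (with $\delta=b_k^{-1}$) for the bound, finishing by density of compactly supported functions. The bookkeeping you flag ($b_k^{-1}(1+b_k)=1+b_k^{-1}$) works out exactly as in the paper's proof.
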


\begin{proof}
First assume that  $f, h \in L^2 (\mathbb{R} ) $ are compactly supported. Since $\inner{f}{M_{lb_k}g_k}
= \widehat{(f\bar{g}_k)}(lb_k)$ we can write $S_{g,\ga}$ as
\begin{equation}\label{eq:S}
S_{g,\ga}f(t) = \sum_{k,l\in\Z} \widehat{(f\bar{g}_k)}(lb_k) M_{lb_k} \ga_k(t) =
\sum_{k\in\Z} m_k(t) \ga_k(t)\,,
\end{equation}
where $m_k(t) = \sum_{l\in\Z} \widehat{(f\bar{g}_k)}(lb_k) e^{2\pi i lb_k
t}$, for every $k\in\Z$. The functions $m_k$ are $b_k^{-1}$ periodic and by the
Poisson formula can be written as
\begin{equation}\label{eq:m_k}
m_k(t) = b_k^{-1} \sum_{l\in\Z} (f \bar{g}_k)(t-lb_k^{-1})\,.
\end{equation}
Therefore, substituting (\ref{eq:m_k}) in (\ref{eq:S}) yields the Walnut
representation. 

We next prove the boundedness~\eqref{eq:bound_general}. In the following chain of inequalities, we will use Cauchy-Schwartz inequality for sums and integrals and, since all summands have absolute value, Fubini's theorem to justify  changing  the order of summation and integral. We thus find
\begin{align}
&\abs{\inner{S_{g,\ga} f}{h}} = \absBig{\innerBig{\sum_{k,l\in\Z} b_k^{-1}
\overline{g_k(\cdot -lb_k^{-1})} \ga_k(\cdot) f(\cdot-lb_k^{-1})}{h}} \nonumber \\
&\leq \sum_{k,l\in\Z}b_k^{-1} \int_{\R}
\abs{g_k(t-lb_k^{-1})} \abs{\ga_k(t)}\abs{f(t-lb_k^{-1})}
\abs{h(t)} \, dt \nonumber \\
& \leq \sum_{k,l\in\Z} b_k^{-1} \left [
\int_{\R} \abs{g_k(t-lb_k^{-1})} \abs{\ga_k(t)}\abs{f(t-lb_k^{-1})}^2
\, dt \right ]^{1/2} \left [ \int_{\R} \abs{g_k(t-lb_k^{-1})}
\abs{\ga_k(t)}\abs{h(t)}^2\, dt \right]^{1/2} \nonumber \\
& \leq \left [ \sum_{k,l\in\Z} b_k^{-1} \int_{\R}
\abs{g_k(t)} \abs{\ga_k(t+lb_k^{-1})}\abs{f(t)}^2\, dt
\right]^{1/2} \left [ \sum_{k,l\in\Z} b_k^{-1} \int_{\R}
\abs{\ga_k(t)} \abs{g_k(t-lb_k^{-1})}\abs{h(t)}^2 \, dt
\right]^{1/2} \nonumber \\\label{eq:frame_operator}
& = \left [\int_{\R} \abs{f(t)}^2 \sum_{k,l\in\Z} b_k^{-1}\abs{g_k(t)}
\abs{\ga_k(t-lb_k^{-1})} \, dt \right ]^{1/2} \left [ \int_{\R} \abs{h(t)}^2
\sum_{k,l\in\Z} b_k^{-1}\abs{\ga_k(t)} \abs{g_k(t-lb_k^{-1})} \, dt \right
]^{1/2}\,.
\end{align}
The first term in the last expression can be bounded as follows
\begin{align}
\int_{\R} \abs{f(t)}^2 &\sum_{k,l\in\Z} b_k^{-1}\abs{g_k(t)}
\abs{\ga_k(t-lb_k^{-1})} \, dt =
\sum_{k\in\Z} b_k^{-1}  \int_{\R} \sum_{l\in\Z}
\abs{\ga_k(t-lb_k^{-1})} \abs{f(t)}^2 \abs{g_k(t)}\, dt\nonumber \\
& \leq \sum_{k\in\Z}\left ( b_k^{-1} \esssup_{t\in\R} \sum_{l\in\Z}
\abs{\ga_k(t-lb_k^{-1})}\right ) \int_{\R} \abs{f(t)}^2 \abs{g_k(t)}\, dt
\nonumber \\
& \leq \sup_{k\in\Z}\left (  b_k^{-1} \esssup_{t\in\R}
\sum_{l\in\Z} \abs{\ga_k(t-lb_k^{-1})} \right ) 
\sum_{k\in\Z} \int_{\R} \abs{f(t)}^2 \abs{g_k(t)}\, dt\nonumber\\
\label{eq:frame_operator_cont}
& \leq \sup_{k\in\Z}\left (  b_k^{-1} \esssup_{t\in\R}
\sum_{l\in\Z} \abs{\ga_k(t-lb_k^{-1})} \right ) \left ( \esssup_{t\in\R}
\sum_{k\in\Z} \abs{g_k(t)} \right ) \norm{f}_2^2\,.
\end{align}
Using relation \eqref{eq:wiener_norm} for 
$\esssup_{t\in\R}
\sum_{l\in\Z} \abs{\ga_k(t-lb_k^{-1})}$
 and substituting \eqref{eq:frame_operator_cont} into (\ref{eq:frame_operator}) yields \eqref{eq:bound_general}. The estimate for the second term in  \eqref{eq:frame_operator} is obtained analogously.  By the density of compactly supported functions in
$\Lt(\R)$, the estimate holds for all of $\Lt(\R)$.
\end{proof}

\begin{remark}\label{rem:bound}
From \eqref{eq:frame_operator} in the proof of Proposition~\ref{prop:non_walnut}, it follows that the operator
$S_{g,\ga}$ is also bounded by
\begin{align*}
\abs{\inner{S_{g,\ga} f}{h}} &\leq \left ( \esssup_{t\in\R} \sum_{k,l\in\Z}
b_k^{-1} \abs{g_k(t-lb_k^{-1})}\abs{\ga_k(t)} \right )^{1/2} \\ \nonumber 
& \cdot \left ( \esssup_{t\in\R} \sum_{k,l\in\Z}
b_k^{-1} \abs{\ga_k(t-lb_k^{-1})}\abs{g_k(t)} \right )^{1/2} \norm{f}_2 \norm{h}_2\,.
\end{align*}
\end{remark}

\begin{remark}
In the case of a frame operator $S_{g,g}$, the Walnut representation \eqref{eq:walnut_general} becomes
\begin{equation}\label{eq:walnut}
S_{g,g}f(t) = \sum_{k,l\in\Z} b_k^{-1} \overline{g_k(t-lb_k^{-1})} g_k(t) f(t-lb_k^{-1}) \quad \mbox{a.e.}\,,
\end{equation}
and the above bounds reduce to 
\begin{align*}
\abs{\inner{S_{g,g}f}{h}} &\leq \Big ( \sup_{k\in\Z}
(1+b_k^{-1})\norm{g_k}_{W(L^{\infty},\ell^1)} \Big ) 
\Big (\esssup_{t\in\R} \sum_{k\in\Z} \abs{g_k(t)} \Big ) 
\norm{f}_2 \norm{h}_2 \\ 
\abs{\inner{S_{g,g}f}{h}} &\leq  \left ( \esssup_{t\in\R} \sum_{k,l\in\Z}
b_k^{-1} \abs{g_k(t-lb_k^{-1})}\abs{g_k(t)} \right ) \norm{f}_2 \norm{h}_2
\end{align*}
\end{remark}

\begin{remark}
Note that in the painless case of Theorem~\ref{BDF}, the frame operator reduces to the multiplication operator $S_{g,g} f = \sum_{k\in\Z}G^{g,g}_{k,0} \cdot f = G_0 \cdot f$.
\end{remark}

\begin{remark}
In the standard setting of Gabor frames, i.e. $g_k(t) = g(t-ak)$ for
fixed $a>0$, and $b_k=b$ for all $k\in\Z$, the above bound reduces to the
well know bound
\begin{equation*}
\abs{\inner{S_{g,g}f}{h}} \leq (1+a^{-1})(1+b^{-1}) \norm{g}_{W(L^{\infty},\ell^1)}^2
\norm{f}_2 \norm{h}_2
\end{equation*}
\end{remark}

\subsection{Existence of nonstationary Gabor frames}\label{Se:Exis}

For  windows $g_k$ that are neither compactly supported nor bandlimited,  we
are interested in the existence of  frames of the form
$\mathcal{G}(\mathbf{g},\mathbf{b})$  and in the construction of the involved
parameters. The
following theorem derives a sufficient condition for the existence of
nonstationary Gabor frames and shows that this condition can be satisfied.

In this and the subsequent sections, $[b_L,b_U]$, $[p_L,p_U]$, $[C_L,C_U]$ are compact intervals  of positive real numbers. 

\begin{theorem}\label{thm:nonstationary_frames_1}
Let $\mathbf{g} = \{g_k \in W(L^{\infty},\ell^1):\; k\in\Z\}$ be a set of windows such that 
\begin{itemize}
\item[i)] for some positive constants $A_0, B_0$
\begin{equation}\label{Eq:uplowbound}
0<A_0\leq\sum_{k\in\Z} \abs{g_k(t)}^2 \leq B_0 <\infty\,\mbox{ a.e. };
\end{equation}
\item[ii)] for all  $k\in\Z$, the windows decay polynomially around a $\delta$-separated set $\{ a_k\, : \, k\in \Z \}$ of time-sampling points $a_k$
\begin{equation}\label{eq:g_k}
\abs{g_k(t)} \leq C_k(1+\abs{t-a_k})^{-p_k} \,, 
\end{equation}
where  $p_k\in [p_L,p_U]\subset\mathbb{R}$, $p_L>2$  and $C_k \in [C_L, C_U]$.
\end{itemize}
Then there exists a sequence $\{b_k^0\}_{k\in\Z}$, such that for $b_k \leq
b_k^0$, $k\in\Z$, the nonstationary Gabor
system $\mathcal{G}(\mathbf{g},\mathbf{b})$ forms a frame
for $\Lt(\R)$.
\end{theorem}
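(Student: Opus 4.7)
The plan is to combine the Walnut representation from Proposition~\ref{prop:non_walnut} with a diagonal-plus-small-perturbation argument, in direct analogy with the classical existence proof for regular Gabor frames (cf.~\cite{wa92}). Throughout, I assume $b_k\in[b_L,b_U]$ with $b_L>0$; the theorem will then say that the $b_k^0$ can be chosen inside this interval. I split the Walnut representation~\eqref{eq:walnut} into the $l=0$ diagonal piece and an off-diagonal remainder,
$$S_{g,g}f(t) \;=\; D_{\mathbf{b}}(t)\,f(t) \;+\; R_{\mathbf{b}}f(t),\qquad D_{\mathbf{b}}(t) \;=\; \sum_{k\in\Z} b_k^{-1}\,|g_k(t)|^2.$$
Hypothesis (i) immediately gives $A_0/b_U\leq D_{\mathbf{b}}(t)\leq B_0/b_L$ a.e., so multiplication by $D_{\mathbf{b}}$ is a bounded positive invertible operator on $\Lt(\R)$, and the norm of its inverse is at most $b_U/A_0$.

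The heart of the argument is showing that the off-diagonal piece $R_{\mathbf{b}}$ has operator norm that tends to zero as $\sup_k b_k\to 0$. Using the Schur-type estimate behind Remark~\ref{rem:bound}, restricted to $l\neq 0$, it suffices to bound
$$\esssup_{t\in\R}\;\sum_{k\in\Z}\sum_{l\neq 0} b_k^{-1}\,|g_k(t-lb_k^{-1})|\,|g_k(t)|.$$
Plugging in the decay assumption~\eqref{eq:g_k} and carrying out the inner sum over $l$ via Lemma~\ref{lem:estimates}(a), I expect a bound of the form
$$\sum_{l\neq 0} b_k^{-1}\,(1+|t-a_k-lb_k^{-1}|)^{-p_k} \;\leq\; K\,b_k^{\,p_k-1},$$
with $K$ depending only on $[p_L,p_U]$. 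The outer $k$-sum of $C_k^{\,2}\,(1+|t-a_k|)^{-p_k}\cdot b_k^{\,p_k-1}$ is then controlled uniformly in $t$ by Lemma~\ref{lem:estimates}(b), since $\{a_k\}$ is $\delta$-separated and $p_L>2$; this yields an overall bound of order $(\sup_k b_k)^{p_L-1}$, which vanishes as $\sup_k b_k\to 0$.

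Finally, I will choose $\{b_k^0\}$ small enough that for every $b_k\leq b_k^0$ the operator norm of $R_{\mathbf{b}}$ is strictly smaller than $A_0/b_U$. Writing $S_{g,g}=D_{\mathbf{b}}\bigl(I+D_{\mathbf{b}}^{-1}R_{\mathbf{b}}\bigr)$ and invoking the Neumann series then makes $S_{g,g}$ bounded, positive, and invertible; combined with the global upper bound~\eqref{eq:bound_general} from Proposition~\ref{prop:non_walnut}, this yields both frame bounds for $\Gn$.

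The principal technical obstacle is making the inner $l$-estimate truly uniform in $k$. When $t$ lies close to $a_k$, the nearest shifts $\pm b_k^{-1}$ give the dominant contribution, so one must peel off a small, $b_k^{-1}$-dependent number of terms before applying Lemma~\ref{lem:estimates}(a) and then re-absorb them using the $b_k\in[b_L,b_U]$ constraint. Keeping every constant expressible in terms of $[b_L,b_U]$, $[p_L,p_U]$, $[C_L,C_U]$ and $\delta$ only, rather than individual $k$, is where the compact-interval hypothesis on the parameters will be used crucially.
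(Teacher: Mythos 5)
Your overall architecture --- Walnut representation, splitting off the $l=0$ diagonal multiplier $D_{\mathbf{b}}$, showing the off-diagonal remainder is small relative to $A_0$ when the $b_k$ are small --- is exactly the paper's strategy (the paper works with the quadratic form $\inner{S_{g,g}f}{f}$ and reads off the frame bounds directly, rather than inverting via a Neumann series; that difference is cosmetic). The gap is in the single estimate that carries the whole proof. The claimed inner bound
\begin{equation*}
\sum_{l\neq 0} b_k^{-1}\,(1+\abs{t-a_k-lb_k^{-1}})^{-p_k} \;\leq\; K\, b_k^{\,p_k-1}
\end{equation*}
is false uniformly in $t$: for $t=a_k+l_0b_k^{-1}$ with $l_0\neq 0$, the single term $l=l_0$ already contributes $b_k^{-1}$, which blows up as $b_k\to 0$. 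The estimate can only be saved by keeping the other factor $\abs{g_k(t)}\leq C_k(1+\abs{t-a_k})^{-p_k}$ \emph{inside} the double sum, because precisely when some $t-a_k-lb_k^{-1}$ is small one has $\abs{t-a_k}\geq b_k^{-1}/2$, so the companion factor is of order $b_k^{p_k}$. You flag this as ``the principal technical obstacle'' and sketch a peel-off-and-reabsorb plan, but the reabsorption you describe invokes the constraint $b_k\in[b_L,b_U]$, which is not what compensates the bad term; the decay of $\abs{g_k(t)}$ is. The paper's missing ingredient is the elementary inequality $(1+\abs{x+y})^{-p}\leq(1+\abs{x})^{p}(1+\abs{y})^{-p}$: it splits the exponent of the shifted factor as $p_k=(1+\mu)+(p_k-(1+\mu))$ with $0<\mu<p_L-2$, turning the product $\abs{g_k(t)}\abs{g_k(t-lb_k^{-1})}$ into $(1+\abs{t-a_k})^{-(1+\mu)}$ (summable over $k$ by Lemma~\ref{lem:estimates}(b)) times $\abs{l}^{-p_k+1+\mu}\,b_k^{\,p_k-1-\mu}$ (summable over $l$ by Lemma~\ref{lem:estimates}(a), since $p_L-1-\mu>1$). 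Without this, or an explicit two-case argument replacing it, your proof does not close.

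A secondary point: the theorem does not posit $b_k\in[b_L,b_U]$. The paper constructs $b_k=(\epsilon/C_k)^{1/p_k}$ and must then separately control the ratio $\max_k\{b_k^{-1}\}/\min_k\{b_k^{-1}\}$, which multiplies the remainder $R$ in the lower frame bound; your fixed-interval normalization hides this bookkeeping. It could be repaired, for example by taking $b_k^0$ constant, but as written the choice of $\{b_k^0\}$ and the verification that the smallness survives division by $\min_k\{b_k^{-1}\}$ are not supplied.
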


\begin{proof}
Let $f\in\Lt(\R)$. Applying \eqref{eq:walnut}, we write
\begin{equation*}
\inner{S_{g,g}f}{f} = \int_{\R} \sum_{k\in\Z} b_k^{-1} \abs{g_k(t)}^2 \abs{f(t)}^2 \,
dt + \int_{\R} \sum_{l\in\Z\setminus \{0\}} \sum_{k\in\Z}
b_k^{-1}g_k(t)\overline{g_k(t-lb_k^{-1})} f(t-lb_k^{-1})\overline{f(t)}\,dt
\end{equation*}
Using similar arguments as in the derivation of (\ref{eq:frame_operator}), we obtain
\begin{align*}
\absBig{\int_{\R} \sum_{l\in\Z\setminus \{0\}} \sum_{k\in\Z}
&b_k^{-1}g_k(t)\overline{g_k(t-lb_k^{-1})} f(t-lb_k^{-1})\overline{f(t)}\,dt}
\leq \nonumber \\
&\leq \left (\esssup_{t\in\R} \sum_{l\in\Z\setminus \{0\}} \sum_{k\in\Z}
b_k^{-1}\abs{g_k(t)}\abs{g_k(t-lb_k^{-1})} \right ) \, \norm{f}_2^2
\nonumber \\
&\leq \max_{k\in\Z}\{b_k^{-1}\} \, \underbrace{ 
\sum_{l\in\Z\setminus \{0\}} \left ( \esssup_{t\in\R}
\sum_{k\in\Z} \abs{g_k(t)}\abs{g_k(t-lb_k^{-1})} \right )}_{R}
\, \norm{f}_2^2 \,.
\end{align*}
Therefore, lower and upper  frame bounds are obtained from
\begin{align}\label{eq:frame_bounds}
\inner{S_{g,g} f}{f} \, \norm{f}_2^{-2} \geq & \min_{k\in\Z}\{b_k^{-1}\}\,\left
( \essinf_{t\in\R} \sum_{k\in\Z} \abs{g_k(t)}^2  - \,
\frac{\max_{k\in\Z}\{b_k^{-1}\}}{\min_{k\in\Z}\{b_k^{-1}\}} R
\right)\\
\inner{S_{g,g} f}{f} \, \norm{f}_2^{-2} \leq & \max_{k\in\Z}\{b_k^{-1}\} \, \left (
\esssup_{t\in\R} \sum_{k\in\Z}\abs{g_k(t)}^2 + \, R \right)\,,\nonumber
\end{align}
We need to construct a sequence of $b_k$, $k\in\Z$, such that for all
$f\in\Lt(\R)$, \eqref{eq:frame_bounds} is bounded away from zero.

Let $\epsilon <C_L$ and consider the sequence of frequency shifts  $b_k = 
(\frac{ \epsilon}{C_k})^{1/p_k}$. Then 
$\min_{k\in\Z}\{b_k^{-1}\} \geq  (C_L\epsilon^{-1})^{1/p_2}$, $\max_{k\in\Z}\{b_k^{-1}\} \leq  (C_U
\epsilon^{-1})^{1/p_1}$ and
\begin{equation}\label{eq:max/min}
\frac{\max_{k\in\Z}\{b_k^{-1}\}}{\min_{k\in\Z}\{b_k^{-1}\}} \leq C_U^{1/p_L}
C_L^{-1/p_U} \, \,\epsilon^{1/p_U - 1/p_L}\,.
\end{equation}
Since $(1+\abs{x+y})^{-p} \leq (1+\abs{x})^p(1+\abs{y})^{-p}$ for
$x,y\in\R$ and $p\geq 0$, using \eqref{eq:g_k}, we have, for some $\mu$ with $p_L-2> \mu>0$:
\begin{align*}
\abs{g_k(t)}\abs{g_k(t-lb_k^{-1})} &\leq C_k^2 (1 + \abs{t-a_k})^{-p_k}
(1+\abs{t-a_k-lb_k^{-1}})^{-p_k+(1+\mu)} \nonumber \\
&\leq C_k^2 (1 + \abs{t-a_k})^{(1+\mu)} (1 + \abs{l}b_k^{-1})^{-p_k+(1+\mu)}\nonumber \\
&\leq C_k^2 (1 + \abs{t-a_k})^{-(1+\mu)} \, \, \abs{l}^{-p_k+(1+\mu)}
\,\,b_k^{p_k-(1+\mu)}\nonumber \\
&= \underbrace{C_k^{1+(1+\mu)/p_k}}_{E_k} (1 + \abs{t-a_k})^{-(1+\mu)} \, \,
\abs{l}^{-p_k+(1+\mu)} \,\, \epsilon^{1-(1+\mu)/p_k} \,. 
\end{align*}
Hence,	
\begin{align}\label{eq:sup_g}
\sum_{k\in\Z} \abs{g_k(t)}&\abs{g_k(t-lb_k^{-1})} \leq
\sum_{k\in\Z} E_k (1 + \abs{t-a_k})^{-(1+\mu)} \, \,
\abs{l}^{-p_k+(1+\mu)} \,\, \epsilon^{1-(1+\mu)/p_k}  \nonumber \\
&\leq \max_{k\in\Z} E_k \,
\, \abs{l}^{-p_L+(1+\mu)}\,\, \epsilon^{1-(1+\mu)/p_L}\, \sum_{k\in\Z} (1 +
\abs{t-a_k})^{-(1+\mu)} \nonumber \\
&\leq \max_{k\in\Z} E_k \, \, \abs{l}^{-p_L+(1+\mu)}\,\, \epsilon^{1-(1+\mu)/p_L}\,
2 \left ( 1 + (1+\delta)^{-(1+\mu)} (\delta^{-1} + 1+\mu)\mu^{-1} \right
)\,,
\end{align}
where the last estimate follows from Lemma~\ref{lem:estimates}~(b). Summing the expression
(\ref{eq:sup_g}) over $l \in\Z\setminus\{0\}$ using
Lemma~\ref{lem:estimates}~<	, we  see that $R$, as a function of $\epsilon$,
behaves like $\epsilon^{1-(1+\mu)/p_L}$, i.e. $R \approx \epsilon^{1-(1+\mu)/p_L}$, and $R$ tends to $0$ for $\epsilon\rightarrow
0$. Moreover, 
\begin{equation*}
\frac{\max_{k\in\Z}\{b_k^{-1}\}}{\min_{k\in\Z}\{b_k^{-1}\}} R \approx
\epsilon^{1 - (2+\mu)/p_L + 1/p_U} 
\end{equation*}
can be made arbitrarily small by choosing $\epsilon$ small since $1 -
(2+\mu)/p_L + 1/p_U >0$. Therefore, if
$\epsilon_0$ is such that for $b_k^0 := (\frac{\epsilon_0}{C_k})^{1/p_k}$, $\frac{\max_{k\in\Z}\{b_k^{-1}\}}{\min_{k\in\Z}\{b_k^{-1}\}} R < A_0$, then $\{ M_{lb_k} g_k \}_{k,l\in\Z}$ is a frame for all $b_k \leq b_k^0$.
\end{proof}

For completeness, we state the equivalent result for analysis windows $g_k$ with polynomial decay in the frequency domain.
\begin{corollary}\label{cor:nonstationary_frames_FS}
Let $\mathbf{g} = \{g_k \in L^2 (\mathbb{R} ):\, \hat{g}_k\in W(L^{\infty},\ell^1),\, k\in\Z\}$ be a set of  windows such
that 
\begin{itemize}
\item[i)] for some positive constants $A_0, B_0$
\begin{equation}\label{Eq:uplowbound1}
0<A_0\leq\sum_{k\in\Z} \abs{\hat{g}_k(t)}^2 \leq B_0 <\infty\,\mbox{ a.e. };
\end{equation}
\item[ii)] for all  $k\in\Z$, the windows decay polynomially around a $\delta$-separated set $\{ b_k\, : \, k\in \Z \}$ of frequency-sampling points $b_k$:
\begin{equation}\label{eq:g_k_1}
\abs{\hat{g}_k(t)} \leq C_k(1+\abs{t-b_k})^{-p_k} \,, 
\end{equation}
where  $p_k$ and $C_k$ are chosen as in Theorem~\ref{thm:nonstationary_frames_1}.
\end{itemize}
Then there exists a sequence $\{a_k^0\}_{k\in\Z}$, such that for $a_k \leq
a_k^0$, $k\in\Z$, the nonstationary Gabor system  $\{T_{la_k} g_k\, : \, k,l\in\Z \}$ forms a frame
for $\Lt(\R)$.
\end{corollary}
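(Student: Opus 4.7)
The plan is to reduce Corollary~\ref{cor:nonstationary_frames_FS} to Theorem~\ref{thm:nonstationary_frames_1} by passing to the Fourier side. The Fourier transform is a unitary isomorphism on $\Lt(\R)$, so a sequence $\{f_j\}\subset\Lt(\R)$ is a frame (with bounds $A,B$) if and only if $\{\hat{f}_j\}$ is a frame (with the same bounds). Using the intertwining relations $\widehat{T_x g} = M_{-x}\hat{g}$ (and $\widehat{M_\omega g} = T_\omega \hat{g}$), the translated system in question satisfies
\begin{equation*}
\widehat{T_{la_k}g_k} = M_{-la_k}\hat{g}_k\,.
\end{equation*}
Since $l$ ranges over all of $\Z$, the index $-l$ can be renamed $l$, so frame-ness of $\{T_{la_k}g_k:k,l\in\Z\}$ is equivalent to frame-ness of the nonstationary Gabor system $\mathcal{G}(\hat{\mathbf{g}},\mathbf{a})=\{M_{la_k}\hat{g}_k:k,l\in\Z\}$.

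I would then simply verify that the hypotheses of Theorem~\ref{thm:nonstationary_frames_1} are met by the windows $\{\hat{g}_k\}$ with ``time-sampling points'' $\{b_k\}$ and sought frequency-shift parameters $\{a_k\}$. Hypothesis (i) of the theorem follows directly from \eqref{Eq:uplowbound1}, since $\sum_k |\hat{g}_k(t)|^2$ is bounded above and below by the same positive constants. Hypothesis (ii) follows from \eqref{eq:g_k_1}: the $\hat{g}_k\in W(L^\infty,\ell^1)$ decay polynomially around the $\delta$-separated set $\{b_k\}$ with exponents $p_k\in[p_L,p_U]$ and constants $C_k\in[C_L,C_U]$ exactly as required.

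Theorem~\ref{thm:nonstationary_frames_1} then produces a sequence $\{a_k^0\}_{k\in\Z}$ (the theorem's $\{b_k^0\}$ reinterpreted in our setting) such that for every choice $a_k\le a_k^0$ the system $\mathcal{G}(\hat{\mathbf{g}},\mathbf{a})$ is a frame for $\Lt(\R)$. Applying the inverse Fourier transform preserves the frame property and its bounds, so $\{T_{la_k}g_k:k,l\in\Z\}$ is a frame for $\Lt(\R)$, which is the desired conclusion. There is no real obstacle: all of the analytic work was already carried out in the time-domain Theorem~\ref{thm:nonstationary_frames_1}, and the only thing to be careful about is correctly tracking the roles of the time- and frequency-sampling points and the sign in the Fourier intertwining (which is washed out by summing over $l\in\Z$).
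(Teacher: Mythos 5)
Your proposal is correct and is exactly the argument the paper intends: the corollary is stated without proof as "the equivalent result," to be obtained by conjugating with the unitary Fourier transform, using $\widehat{T_{la_k}g_k}=M_{-la_k}\hat g_k$ and reindexing $l\mapsto -l$ to reduce to Theorem~\ref{thm:nonstationary_frames_1} applied to the windows $\hat g_k$ with sampling points $\{b_k\}$. Your verification of the hypotheses and the bookkeeping of the roles of $a_k$ and $b_k$ are accurate.
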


\subsubsection{Nonstationary Gabor frames on modulation spaces}
Modulation spaces, cf.~\cite{fe81-3,gr01}, are considered as the appropriate function spaces for time-frequency analysis and in particular, for the study of Gabor frames. By their definition, modulation spaces require decay in both time and frequency.
Under additional assumptions on the windows $g_k$, the collection 
$\mathcal{G}(\mathbf{g},\mathbf{b})$ is a frame for all modulation spaces $M^p$,
$1\leq p \leq
\infty$.

\begin{proposition}\label{thm:modulation}
Let $\mathcal{G}(\mathbf{g},\mathbf{b})$ be a frame for $\Lt(\R)$ satisfying the
uniform estimate
\begin{equation}\label{eq:stft}
\abs{V_\phi g_k(x,\om)} \leq C (1+\abs{x-a_k})^{-r-2}
(1+\abs{\om})^{-r-2}\,, \quad r>2
\end{equation}
where $\phi$ is a Gaussian window. Then the frame operator $S$ is invertible
simultaneously on all modulation spaces $M^p$ for $1\leq p\leq \infty$.
\end{proposition}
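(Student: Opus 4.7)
The plan is to first show that $S = S_{g,g}$ is bounded on $M^p$ for every $1 \leq p \leq \infty$, and then transfer invertibility from $L^2$ to $M^p$ by placing $S$ in an inverse-closed subalgebra of $B(\Lt(\R))$. Since modulation-space norms are characterised by weighted $L^p$-norms of the STFT, $\norm{f}_{M^p} \asymp \norm{V_\phi f}_{L^p(\R^2)}$, the natural object to control is $V_\phi(Sf)$.

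I would start from
\begin{equation*}
V_\phi(Sf)(z) = \sum_{k,l\in\Z} \inner{f}{g_{k,l}}\, V_\phi g_{k,l}(z),
\qquad g_{k,l} = M_{b_k l} g_k,
\end{equation*}
and rewrite $\inner{f}{g_{k,l}}$ via the STFT inversion formula as an $L^2(\R^2)$-pairing with $V_\phi f$. This exhibits $V_\phi(Sf)$ as an integral operator on $V_\phi f$ whose kernel is $K_S(z,w) = c\sum_{k,l\in\Z} V_\phi g_{k,l}(z)\, \overline{V_\phi g_{k,l}(w)}$. Using the covariance $V_\phi(M_{b_k l} g_k)(x,\om) = e^{-2\pi i x b_k l}\, V_\phi g_k(x,\om - b_k l)$ and the hypothesis \eqref{eq:stft}, one obtains
\begin{equation*}
\abs{K_S(z,w)} \leq C'\sum_{k,l\in\Z} (1+\abs{x-a_k})^{-r-2}(1+\abs{x'-a_k})^{-r-2}(1+\abs{\om-b_k l})^{-r-2}(1+\abs{\om'-b_k l})^{-r-2}.
\end{equation*}
The sum over $k$ is controlled by Lemma~\ref{lem:estimates}(b) thanks to the $\delta$-separation of $\{a_k\}$ (which may be assumed without loss of generality), and the sum over $l$ by Lemma~\ref{lem:estimates}(a) since the $b_k$ lie in $[b_L,b_U]$; the exponent $r>2$ is tailored to yield $L^1$-summability. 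The resulting bound is of convolution type, $\abs{K_S(z,w)} \leq H(z-w)$ with $H \in L^1(\R^2)$, so Young's inequality gives boundedness of $S$ on every $M^p$.

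For invertibility, I would place $S$ into a Sjöstrand / Gröchenig--Leinert type $*$-subalgebra $\mathcal{A} \subset B(\Lt(\R))$ of operators whose STFT-kernel admits a polynomially decaying convolution dominant as above. Since $\mathcal{G}(\mathbf{g},\mathbf{b})$ is already a frame for $\Lt(\R)$, $S$ is positive and invertible on $\Lt(\R)$; spectral invariance of $\mathcal{A}$ in $B(\Lt(\R))$ then forces $S^{-1} \in \mathcal{A}$, hence $S^{-1}$ is bounded on every $M^p$, which combined with the boundedness of $S$ on $M^p$ yields simultaneous invertibility.

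The main obstacle is the spectral-invariance step in the nonstationary setting: the classical Wiener-type lemmas are formulated for operators indexed by a lattice or for regular Gabor frames, while here both the time centers $a_k$ and the modulation steps $b_k l$ form an irregular, non-lattice subset of the time-frequency plane. Adapting the Gröchenig--Leinert Wiener lemma to this setting requires encoding the decay of $K_S$ in a Wiener-amalgam norm on the irregular index set and verifying that the resulting matrix algebra is inverse-closed in $B(\lt)$, which is where the uniform decay of order $r+2 > 4$ (and the bounds $p_k \in [p_L,p_U]$, $b_k\in[b_L,b_U]$ implicit in the existence theorem) is essential.
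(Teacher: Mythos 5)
Your overall strategy --- realize the frame operator on the STFT side as an integral operator, dominate its kernel by a convolution kernel with polynomial off-diagonal decay, and then invoke spectral invariance of a suitable operator algebra --- is exactly the mechanism underlying the result, and your first step (boundedness of $S$ on every $M^p$ via Young's inequality) is sound. However, as written your argument has a genuine gap at the decisive point: you obtain invertibility of $S$ on $M^p$ only modulo a Wiener-type spectral invariance lemma for operators indexed by the irregular set $\{(a_k,lb_k)\}$, and you explicitly defer its proof (``adapting the Gr\"ochenig--Leinert Wiener lemma to this setting requires \dots verifying that the resulting matrix algebra is inverse-closed''). That adaptation is the entire content of the theorem; without it the proof does not close.

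The paper sidesteps this by citing a result that is already formulated at the required level of generality: Gr\"ochenig's localization theory for frames \cite{G04} applies to any $\Lt$-frame $\{g_z : z=(z_1,z_2)\in\mathcal{Z}\subseteq\R^2\}$ indexed by a separated (not necessarily lattice) subset of the time-frequency plane and satisfying the envelope estimate $\abs{V_\phi g_z(x,\om)}\le C(1+\abs{(x-z_1,\om-z_2)})^{-r-2}$; its conclusion is precisely simultaneous invertibility of the frame operator on all $M^p$, $1\le p\le\infty$. The only thing the paper actually verifies is that hypothesis: by the covariance of the STFT, $\abs{V_\phi g_{k,l}(x,\om)}=\abs{V_\phi g_k(x,\om-lb_k)}$, and the product bound \eqref{eq:stft} converts into the radial bound $C(1+\abs{(x-a_k,\om-lb_k)})^{-r-2}$ using $(1+\abs{x}+\abs{\om})^{-r}\ge(1+\abs{x})^{-r}(1+\abs{\om})^{-r}$ together with the equivalence of the weights $(1+\abs{(x,\om)})^{r}$ and $(1+\abs{x}+\abs{\om})^{r}$. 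So the ``main obstacle'' you identify is not an obstacle in the literature, but it is one in your proof: either cite the localization-of-frames result for separated index sets directly, or actually supply the inverse-closedness argument you sketch --- merely noting that it ``requires adapting'' known lemmas leaves the invertibility claim unproven.
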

\begin{proof} Notice, that
\begin{equation*}
\abs{V_\phi g_{k,l}(x,\om)}  \leq C(1+\abs{(x-a_k,\om - lb_k)})^{-r-2}\,,
\end{equation*}
since $(1+\abs{x} +\abs{\om} )^{-r} \geq (1+\abs{x})^{-r}(1+\abs{\om})^{-r}$ and the weights 
$(1+\abs{(x,\om)})^r$ and $(1+\abs{x}+\abs{\om})^r$ are equivalent.\\

A result on Gabor
molecules~\cite{G04} states that,  if an $\Lt$-frame $\{
g_z\,:\,z=(z_1,z_2)\in\mathcal{Z}\subseteq \Rt \}$, where $\mathcal{Z}$ is separable, 
satisfies the uniform estimate $\abs{V_{\phi} g_z(x,\om)} \leq C(1+\abs{(x-z_1,\om-z_2)})^{-r-2}$, 
then the frame operator $Sf = \sum_{z\in\mathcal{Z}}\inner{f}{g_z} g_z$ is invertible simultaneously on all
$M^p$ for each $1\leq p \leq \infty$. The result hence follows from 
condition \eqref{eq:stft}. \end{proof}

\subsection{Constructing nonstationary Gabor frames}\label{Se:apnGF}
Theorem~\ref{thm:nonstationary_frames_1} shows that for windows with sufficient uniform decay, nonstationary Gabor frames can always be constructed by choosing sufficient density in the frequency samples. 
In the present section we assume the existence of a certain  nonstationary Gabor frame and explicitly construct a  new  frame  by exploiting  the prior information about  the original one.  This is a situation of practical relevance, since we may often be interested in using windows that decay fast and are negligible outside a support of interest. In particular, we will use the fact that painless nonstationary Gabor frames are easily constructed and deduce new frames from painless frames. The new frames thus obtained will be called \emph{almost painless nonstationary Gabor frames}.

We will subsequently assume    $b_k\in[b_L,b_U]$ for frequency-shift parameters and  we let $C_k\in [C_L,C_U]$ and  $p_k\in[p_L,p_U]$ with $p_L>1$ for the constants involved in the decay assumptions for the analysis windows. We then work with the following constants that depend on the separation of the sampling points, the decay of the windows and the frequency-sampling parameters: 
\begin{equation}\label{Def:Const}
E_1 = 1 + \frac{\delta^{-1} + p_L}{(1+\delta)^{p_L} (p_L-1)}\,\mbox{ and }\,
E_2 = 1 + \frac{b_U+p_U}{(1+b_U^{-1})^{p_L}(p_L-1)}
\end{equation}

We first consider nonstationary Gabor frames obtained by perturbation of a known frame. This result is in the spirit of similar results for regular Gabor frames \cite{CF03,chhe97}.

\begin{proposition}\label{thm:nonstationary_frames}
Let $\{ a_k \, : \, k\in \Z\}$ be a $\delta$-separated set and
$\mathcal{G}(\mathbf{h},\mathbf{b})$ a nonstationary Gabor frame
with frame bounds $A_h$, $B_h$ and  frame operator $S_{h,h}$. Let $g_k\in\Lt(\R)$
be a set of windows such that for all $k\in\Z$ and for almost all $t \in \R$
\begin{equation}\label{eq:tails}
\abs{g_k(t) - h_k(t)} \leq C_k (1+\abs{t-a_k})^{-p_k}.
\end{equation}
If $C_U < \sqrt{A_h\lambda^{-1}}$ for
\begin{equation}\label{eq:lambda}
\lambda = 4b_L^{-1}\cdot E_1 \cdot E_2 , 
\end{equation}
then $\mathcal{G}(\mathbf{g},\mathbf{b})$ is a frame for $\Lt(\R)$ with frame
bounds $A=A_h(1-\sqrt{C_U^2\lambda A_h^{-1}})^2$ and
$B=B_h(1+\sqrt{C_U^2\lambda B_h^{-1}})^2$.
\end{proposition}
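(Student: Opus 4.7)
The plan is to recognize this as a Paley–Wiener style perturbation statement for frames, and to reduce it to bounding the norm of the analysis operator of the difference system. Concretely, let $d_k := g_k - h_k$, so that $d_{k,l} = M_{b_k l} d_k = g_{k,l} - h_{k,l}$, and by hypothesis $|d_k(t)| \leq C_k (1+|t-a_k|)^{-p_k}$. Denote by $C_g, C_h, C_d$ the analysis operators of $\mathcal{G}(\mathbf{g},\mathbf{b})$, $\mathcal{G}(\mathbf{h},\mathbf{b})$, $\mathcal{G}(\mathbf{d},\mathbf{b})$ respectively. The goal is to show that $\|C_d f\|_{\ell^2}^2 \leq C_U^2 \lambda \, \|f\|_2^2$ and then deduce the frame property of $\mathcal{G}(\mathbf{g},\mathbf{b})$ by the triangle inequality $\|C_g f\| \in [\|C_h f\| - \|C_d f\|, \; \|C_h f\| + \|C_d f\|]$.

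The first and main step is the bound on $C_d$. Since $\|C_d f\|^2 = \langle S_{d,d} f, f\rangle$, I would apply the Walnut-type estimate of Remark~\ref{rem:bound} to the system $\mathcal{G}(\mathbf{d},\mathbf{b})$ to obtain
\begin{equation*}
\|C_d f\|_{\ell^2}^2 \leq \Bigl(\esssup_{t\in\R}\sum_{k,l\in\Z} b_k^{-1} |d_k(t-lb_k^{-1})||d_k(t)|\Bigr)\,\|f\|_2^2.
\end{equation*}
Using $|d_k(t)| \leq C_U(1+|t-a_k|)^{-p_L}$ (since $C_k \leq C_U$ and $p_k \geq p_L$) and $b_k^{-1} \leq b_L^{-1}$, it suffices to estimate
\begin{equation*}
\sum_{k\in\Z}(1+|t-a_k|)^{-p_L}\sum_{l\in\Z}(1+|(t-a_k)-lb_k^{-1}|)^{-p_L}.
\end{equation*}
The inner sum is over the $b_k^{-1}$-separated set $\{lb_k^{-1}\}_{l\in\Z}$; applying Lemma~\ref{lem:estimates}(b) with $\delta=b_k^{-1}$ and using $b_k\le b_U$, $p_k\le p_U$, yields the bound $2E_2$, uniformly in $k$. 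The outer sum is over the $\delta$-separated set $\{a_k\}$ and is bounded by $2E_1$ via Lemma~\ref{lem:estimates}(b) with separation $\delta$. Combining gives
\begin{equation*}
\esssup_{t\in\R}\sum_{k,l\in\Z} b_k^{-1} |d_k(t-lb_k^{-1})||d_k(t)| \leq C_U^2 \cdot b_L^{-1}\cdot 2E_1 \cdot 2E_2 = C_U^2 \lambda,
\end{equation*}
with $\lambda = 4 b_L^{-1} E_1 E_2$ exactly as in~\eqref{eq:lambda}. Hence $\|C_d f\|_{\ell^2} \leq C_U \sqrt{\lambda}\,\|f\|_2$.

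The second step is the standard perturbation argument. Since $\mathcal{G}(\mathbf{h},\mathbf{b})$ is a frame with bounds $A_h, B_h$, we have $\sqrt{A_h}\|f\|_2 \leq \|C_h f\|_{\ell^2} \leq \sqrt{B_h}\|f\|_2$. The triangle inequality applied to $C_g = C_h + C_d$ yields
\begin{equation*}
\bigl(\sqrt{A_h} - C_U\sqrt{\lambda}\bigr)\|f\|_2 \leq \|C_g f\|_{\ell^2} \leq \bigl(\sqrt{B_h} + C_U\sqrt{\lambda}\bigr)\|f\|_2.
\end{equation*}
Under the hypothesis $C_U < \sqrt{A_h\lambda^{-1}}$, the lower constant is strictly positive, and squaring produces the announced frame bounds $A_h(1-\sqrt{C_U^2 \lambda A_h^{-1}})^2$ and $B_h(1+\sqrt{C_U^2 \lambda B_h^{-1}})^2$.

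The only genuinely delicate point is the bookkeeping in step one: one must check that the estimates of Lemma~\ref{lem:estimates}(b) can be made uniform in $k$ under the ranges $b_k\in[b_L,b_U]$, $p_k\in[p_L,p_U]$, $C_k\in[C_L,C_U]$, and that the resulting constants match the constants $E_1,E_2$ as defined in~\eqref{Def:Const}. Once that matching is secured, the perturbation argument is automatic.
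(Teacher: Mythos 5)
Your proof is correct and follows essentially the same route as the paper: both reduce the claim to the estimate $\sum_{k,l}\abs{\inner{f}{g_{k,l}-h_{k,l}}}^2\le C_U^2\lambda\norm{f}_2^2$ via the Walnut-type bound from Proposition~\ref{prop:non_walnut} (you use the form in Remark~\ref{rem:bound}, the paper uses the factored form \eqref{eq:frame_operator_cont}, but after inserting the decay hypothesis and Lemma~\ref{lem:estimates} both yield the same constant $4b_L^{-1}E_1E_2$), and then both conclude with the standard Christensen--Heil perturbation argument, which is exactly your triangle inequality for the analysis operators.
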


\begin{proof}
By applying Cauchy-Schwartz inequality, it is easy to see that,   for $\mathcal{G}(\mathbf{g},\mathbf{b})$ to be a frame for
$\Lt(\R)$, it suffices to show that $\sum_{k,l\in\Z} \abs{\inner{f}{g_{k,l} -
h_{k,l}}}^2 \leq R \norm{f}_2^2$ for some $R < A_h$, also cf.~\cite[Proposition~4.1.]{chhe97}. Then, frame bounds of $\mathcal{G}(\mathbf{g},\mathbf{b})$ can be taken as $A_h (1-\sqrt{R/A_h})^2$ and $B_h(1+\sqrt{R/B_h})^2$.

To obtain the required error bound, we let   $\psi_k(t) := g_k - h_k$ and use the estimate given in \eqref{eq:frame_operator_cont}:
\begin{equation}\label{eq:perturbation}
\sum_{k,l\in\Z} \abs{\inner{f}{\psi_{k,l}}}^2 \leq \sup_{k\in\Z} \Big (
b_k^{-1} \esssup_{t\in\R} \sum_{l\in\Z} \abs{\psi_k (t-lb_k^{-1})} \Big ) \Big (\mbox{ess
sup}_{t\in\R} \sum_{k\in\Z} \abs{\psi_k(t)} \Big )
\norm{f}_2^2\,.
\end{equation}
The first  term of the above in the above inequality is bounded  by assumption \eqref{eq:tails} and Lemma~\ref{lem:estimates}~(b):
\begin{equation*}
\sum_{k\in\Z} \abs{\psi_k(t)} \leq C_U \sum_{k\in\Z} (1 +
\abs{t-a_k})^{-p_k} \leq C_U\sum_{k\in\Z} (1 +
\abs{t-a_k})^{-p_1} \nonumber \leq 2C_U E_1 \,.
\end{equation*}
To bound the second term, note that $\sum_{l\in\Z} (1+\abs{t-a_k - lb_k^{-1}})^{-p_k}$ is
$b_k^{-1}-$periodic, therefore we can simplify
\begin{equation*}
\esssup_{t\in\R}\sum_{l\in\Z} \abs{\psi_k(t-lb_k^{-1})} \leq C_k
\mbox{ess sup}_{t \in [0,b_k^{-1}]} \sum_{l\in\Z} (1+\abs{t -
lb_k^{-1}})^{-p_k}\,.
\end{equation*}
Hence,  by Lemma~\ref{lem:estimates}, we
obtain for $t\in [0,b_k^{-1}]$:
\begin{align*}
\sum_{l\in\Z} (1+\abs{t - lb_k^{-1}})^{-p_k} &\leq 1 + \sum_{l=1}^{\infty}
(1+\abs{t - lb_k^{-1}})^{-p_k} + \sum_{l=1}^{\infty} (1+t +
lb_k^{-1})^{-p_k} \nonumber \\
(1+(l-1)b_k^{-1})^{-p_k} \nonumber \\
& = 2\sum_{l=0}^{\infty} (1+lb_k^{-1})^{-p_k} = 2 \Big (1+ \sum_{l=1}^{\infty}
(1+lb_k^{-1})^{-p_k}\Big ) \nonumber  \leq 2 E_2.
\end{align*}
Gathering all the estimates, we obtain
\begin{align*}
\sum_{k,l\in\Z} \abs{\inner{f}{\psi_{k,l}}}^2 &\leq C_U^2
4b_1^{-1}E_1\cdot E_2\norm{f}_2^2
 = C_U^2 \lambda \norm{f}_2^2\,.
\end{align*}
By assumption $C_U^2 \lambda<A_h$, and the proof is complete.
\end{proof}

Using Proposition~\ref{thm:nonstationary_frames} we next construct a special class of nonstationary Gabor frames by relying on knowledge of a painless nonstationary Gabor frame. We construct new windows which are no more compactly supported, but coincide with the known, compact windows on their support. We call the resulting new systems 
\emph{almost painless} nonstationary Gabor frames. 

\begin{corollary}\label{cor:nonstationary_frames_2}
Let $\mathbf{g}=\{g_k\in W(L^{\infty},\ell^1): \;k\in\Z\}$ be a set of
windows, and let $I_k$ be the intervals $I_k =[a_k-(2b_k)^{-1}, a_k+(2b_k)^{-1}]$ where $\{ a_k\, : \, k\in\Z \}$ forms a $\delta$-separated set. Assume that $\mathcal{G}(\mathbf{h},\mathbf{b})$, where $h_k = g_k\chi_{I_k}$, is a Gabor frame with lower frame bound $A_h$, and that for $\psi_k = g_k - h_k$,  all $k\in\Z$ and  almost all $t in\R$
\begin{equation}\label{Eq:Decayass}
\abs{\psi_k(t)} \leq \left \{ \begin{array}{ll} C_k
(1+t-a_k-(2b_k)^{-1})^{-p_k}\,,
& t > a_k+(2b_k)^{-1}\,; \\
0\,, & t\in I_k\,; \\
C_k (1-t+a_k-(2b_k)^{-1})^{-p_k}\,, & t < a_k-(2b_k)^{-1}\,. \end{array} \right. 
\end{equation}
 If $C_U  < \sqrt{A_h \lambda^{-1}}$
for
$\lambda = 4b_L^{-2}\cdot \delta^{-1} \cdot E_1 \cdot E_2$, 
then $\mathcal{G}(\mathbf{g},\mathbf{b})$ forms a nonstationary Gabor frame for
$\Lt(\R)$.
\end{corollary}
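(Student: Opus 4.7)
The plan is to run the perturbation argument from the proof of Proposition~\ref{thm:nonstationary_frames} with $\psi_k=g_k-h_k$, but to revisit the key estimate~\eqref{eq:perturbation} using the interval-shifted decay~\eqref{Eq:Decayass} in place of the point-centered decay~\eqref{eq:tails}. Once
\begin{equation*}
\sum_{k,l\in\Z}\abs{\inner{f}{\psi_{k,l}}}^2 \leq C_U^2\,\lambda\,\norm{f}_2^2
\end{equation*}
is established with $C_U^2\lambda<A_h$, the same perturbation step as in the proposition immediately delivers the frame property for $\mathcal{G}(\mathbf{g},\mathbf{b})$. By~\eqref{eq:perturbation}, the task reduces to bounding the two suprema $\esssup_t\sum_k\abs{\psi_k(t)}$ and $\sup_k b_k^{-1}\esssup_t\sum_l\abs{\psi_k(t-lb_k^{-1})}$.

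The sum over $l$ is handled essentially as in the proof of Proposition~\ref{thm:nonstationary_frames}. Since $\psi_k\equiv 0$ on $I_k$ and $I_k$ has length exactly $b_k^{-1}$, equal to the lattice step, for every $t$ at most one translate $t-lb_k^{-1}$ lies inside $I_k$; the remaining translates sit at distances that increase in integer multiples of $b_k^{-1}$ away from the boundary of $I_k$. Inserting~\eqref{Eq:Decayass} and applying Lemma~\ref{lem:estimates}(a) with step $b_k^{-1}\geq b_U^{-1}$ yields $\sum_l\abs{\psi_k(t-lb_k^{-1})}\leq 2C_U E_2$, so this second factor is bounded by $2 C_U b_L^{-1} E_2$, just as in the proposition.

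The sum over $k$ is where the new hypothesis genuinely differs from \eqref{eq:tails}. The bound in~\eqref{Eq:Decayass} can remain of size $C_k$ in a band of width $\sim b_L^{-1}$ about each endpoint of $I_k$, and if $\delta<b_L^{-1}$ the intervals $I_k$ may overlap, so several $\psi_k$ can be near their peak value at the same $t$. I would therefore split the index set at $\abs{t-a_k}=b_L^{-1}$: by $\delta$-separation of the centers $a_k$, the near indices (those with $\abs{t-a_k}\leq b_L^{-1}$) number at most $\sim 2b_L^{-1}/\delta+1$ and each contributes at most $C_U$; the far indices (those with $\abs{t-a_k}>b_L^{-1}$) satisfy $\abs{t-a_k}-(2b_k)^{-1}\geq \abs{t-a_k}/2$, so their contributions are controlled by a constant multiple of $(1+\abs{t-a_k})^{-p_L}$ and summed via Lemma~\ref{lem:estimates}(b) in terms of $E_1$. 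Combining produces $\esssup_t\sum_k\abs{\psi_k(t)}\leq 2C_U b_L^{-1}\delta^{-1}E_1$, and multiplying the two factors yields the announced $\lambda=4b_L^{-2}\delta^{-1}E_1 E_2$.

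The main obstacle is precisely this first factor. The shift by $(2b_k)^{-1}$ in the bound on $\abs{\psi_k}$ forbids a direct application of Lemma~\ref{lem:estimates}(b) to the centers $a_k$, and the extra scaling $b_L^{-1}\delta^{-1}$ that appears in $\lambda$ (compared to the $\lambda$ of Proposition~\ref{thm:nonstationary_frames}) is exactly the maximal number of intervals $I_k$ that can simultaneously be within distance $\sim b_L^{-1}$ of a given $t$, governed by the ratio between the maximal interval length $b_L^{-1}$ and the separation $\delta$ of the centers.
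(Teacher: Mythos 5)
Your overall strategy coincides with the paper's: both reduce the corollary to the perturbation criterion of Proposition~\ref{thm:nonstationary_frames} via the estimate \eqref{eq:perturbation}, and your treatment of the sum over $l$ (using that $\psi_k$ vanishes on $I_k$, whose length equals the lattice step $b_k^{-1}$, so that the surviving translates recede from the endpoints of $I_k$ in steps of $b_k^{-1}$, and then Lemma~\ref{lem:estimates}(a)) is exactly the paper's computation, giving $2C_kE_2$. You also correctly locate the new difficulty and the origin of the extra factor $b_L^{-1}\delta^{-1}$ in $\lambda$, namely the possible overlap of the intervals $I_k$.

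The gap is in your bound for $\esssup_{t}\sum_k\abs{\psi_k(t)}$: the near/far split does not deliver the constant you assert. For the far indices, replacing $\abs{t-a_k}-(2b_k)^{-1}$ by $\abs{t-a_k}/2$ costs a factor $2^{p_k}$, since $(1+\abs{t-a_k}/2)^{-p_k}\leq 2^{p_k}(1+\abs{t-a_k})^{-p_k}$, while the near indices contribute roughly $(2b_L^{-1}\delta^{-1}+1)C_U$ with no factor $E_1$; the combination you announce, $2C_Ub_L^{-1}\delta^{-1}E_1$, is asserted rather than derived, and what the split actually gives is of the order $C_U\bigl((2b_L^{-1}\delta^{-1}+1)+2^{p_U+1}E_1\bigr)$, which need not be bounded by $2C_Ub_L^{-1}\delta^{-1}E_1$. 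Since the hypothesis of the corollary is $C_U^2\lambda<A_h$ for the specific $\lambda=4b_L^{-2}\delta^{-1}E_1E_2$, a larger constant here means your perturbation bound need not fall below $A_h$, so the corollary as stated is not proved. The paper avoids this loss by observing that \eqref{Eq:Decayass} implies $\abs{\psi_k(t)}\leq C_k\bigl[(1+\abs{t-a_k-(2b_k)^{-1}})^{-p_k}+(1+\abs{t-a_k+(2b_k)^{-1}})^{-p_k}\bigr]$ for all $t$, i.e. polynomial decay centered at the endpoints $a_k\pm(2b_k)^{-1}$; the endpoint sets $\Gamma^{\pm}$ are relatively $\delta$-separated with $\mbox{rel}(\Gamma^{\pm})=\lfloor(2b_L\delta)^{-1}\rfloor$, so Remark~\ref{rem:estimate} applies directly and yields $4C_U\,\mbox{rel}(\Gamma)\,E_1\leq 2C_Ub_L^{-1}\delta^{-1}E_1$ without any extra factor. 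Replacing your near/far split by this re-centering argument closes the gap.
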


\begin{proof}
The proof follows the steps of the proof of Proposition~\ref{thm:nonstationary_frames} with small changes 
on how to approximate the terms in \eqref{eq:perturbation}.
First observe, that for any $t\in\mathbb{R}$ and all $k$, we have
\begin{equation*}
\abs{\psi_k(t)} \leq C_k \left [ (1+\abs{t-a_k-(2b_k)^{-1}})^{-p_k} +
(1+\abs{t-a_k+(2b_k)^{-1}})^{-p_k}\right ]
\end{equation*}
Since the frequency shifts $b_k$ are taken from a finite interval and the set $\{a_k\, : \, k\in\Z \}$ is $\delta-$separated, the sets $\Gamma^+ = \{a_k+ (2b_k)^{-1}\, : \, k\in\Z \}$ and $\Gamma^- = \{a_k- (2b_k)^{-1}\, : \, k\in\Z \}$ are relatively $\delta-$separated with $\mbox{rel}(\Gamma) =\mbox{rel}(\Gamma^+) = \mbox{rel}(\Gamma^-) = \lfloor (2b_L \delta)^{-1} \rfloor$. Therefore, by Remark~\ref{rem:estimate}, it follows that
\begin{align*}
\sum_{k\in\Z} \abs{\psi_k(t)} &\leq C_U \sum_{k\in\Z}
 (1+\abs{t-a_k-(2b_k)^{-1}})^{-p_L} + C_U \sum_{k\in\Z}
(1+\abs{t-a_k+(2b_k)^{-1}})^{-p_L} \nonumber \\
&\leq 4\, C_U \, \mbox{rel}(\Gamma) \, (1+(1+\delta)^{-p_L}(\delta^{-1}+p_L)(p_L-1)^{-1})\,.
\end{align*}

Now,  the expression $\sum_{l\in\Z} \abs{\psi_k(t-lb_k^{-1})}$ is
$b_k^{-1}-$periodic. Let $t\in I_k$, then, by \eqref{Eq:Decayass}
\begin{align*}
\sum_{l\in\Z} \abs{\psi_k(t-lb_k^{-1})} &\leq C_k \left [ \sum_{l > 0}
(1+a_k-(2b_k)^{-1} -t +lb_k^{-1})^{-p_k} \right. \nonumber \\ 
&\left. 
+ \sum_{l < 0} (1-a_k-(2b_k)^{-1} + t -lb_k^{-1})^{-p_k}\right ] \nonumber \\
&\leq C_k 2\sum_{l=0}^{\infty} (1+lb_k^{-1})^{-p_k} \leq 2C_k
(1+(1+b_k^{-1})^{-p_k}(b_k+p_k)(p_k-1)^{-1})\,,
\end{align*}
where the last estimate follows from Lemma~\ref{lem:estimates}(a). 
\end{proof}



\section{Examples}\label{Se:Ex}

We illustrate our theory with two examples. In both examples,  we  consider a basic window and dilations by $2$ and $\frac{1}{2}$, respectively. Since the dilation parameters take only three different values,
there are three kinds of windows,  with support size $1/2$, $1$ and $2$, respectively. Note that, while theoretically possible, sudden changes in the shape and width of adjacent windows turn out to be undesirable for applications, hence we only allow for stepwise change in dilation parameters.

In the first example we  consider a frame that arises as  perturbation of a painless
nonstationary Gabor frame. The perturbation consists in the application of a bandpass filter in order to obtain windows with compact support in the frequency domain. 

\begin{example}\label{Ex1}
Let $h$ be a Hann or raised cosine window, i.e. $h(t)=0.5 + 0.5\cos(2\pi t)$ for
$t\in[-1/2,1/2]$, and zero otherwise. We construct a painless nonstationary Gabor frame by dilating $h$ by $2 $ or $\frac{1}{2}$, respectively: Let $\{s_k\}_{k\in\Z}$ be a sequence with values from the set $\{-1,0,1\}$ with the restriction that   $\abs{s_k - s_{k-1}}\in \{0,1\}$ to avoid sudden changes between adjacent windows.  We then define corresponding shift-parameters by setting $a_0 = 0$ and 
\begin{align*}
a_{k+1}&= a_k + 2^{-s_k}\cdot \frac{5}{6} \quad \quad \mbox{if} \quad s_k >
s_{k+1}\,,\\
a_{k+1}&= a_k + 2^{-s_k+1}\cdot \frac{1}{3} \quad \mbox{if} \quad
s_k=s_{k+1}\,,\\
a_{k+1} &= a_k + 2^{-s_{k+1}}\cdot \frac{5}{6} \quad \mbox{if} \quad s_k <
s_{k+1}\,.
\end{align*}
The points $a_k$, $k\in\mathbb{Z}$, form a separated set with minimum separation $\delta = 1/3$.
Setting $b_k = 2^{s_k}$  and $h_k(t)= T_{a_k}\sqrt{2^{s_k}}h(2^{s_k}t)$, the system $\{M_{lb_k}h_k\, : \, k,l\in\Z\}$
forms a painless nonstationary Gabor frame with lower frame bound
$A_h=0.5$.

Let $\Om=0.02$ and $\phi$ be a bandlimited filter given by
\begin{equation*}
\widehat{\phi}(\om) = 0.5 + 0.5\cos(2\pi\Om^{-1}\om) \mbox{ on its support } [-\Om/2,\Om/2]\,.
\end{equation*}
We build new windows $g_k$ by convolving $\phi$ with $h_k$. The windows
$g_k :=
\conv{\phi}{h_k}$ are no more compactly supported.
Since $\abs{\phi(t)} \leq \Om (1+\abs{t})^{-3}$,  we rely on \cite[Theorem~9.9]{ru66} to deduce the following bound, with 	$C' = \norm{h_k}_{\infty} \frac{\Om}{2}$, $I_k = [a_k-2^{-s_k'},a_k+2^{-s_k'}]$ and $s_k'= s_k+1$:
\begin{equation*}
\abs{g_k(t) - h_k(t)} \leq  C'\left
\{ \begin{array}{ll} (1+(t-a_k)-2^{-s_k'})^{-2} - (1+(t-a_k)+2^{-s_k'})^{-2} & t>a_k+
2^{-s_k'}\\
2 - (1+(t-a_k)+2^{-s_k'})^{-2} - (1-(t-a_k)+2^{-s_k'})^{-2} &  t\in I_k\\
(1-(t-a_k)-2^{-s_k'})^{-2} - (1-(t-a_k)+2^{-s_k'})^{-2} & t<a_k -2^{-s_k'}
\end{array}\right.
\end{equation*}
We obtain the joint  bound, 
$
\abs{g_k(t) - h_k(t)} \leq C_U(1+\abs{t-a_k})^{-2}$  by setting  $ C_k 
= C'\cdot (1+2^{-s_k'})^{2}$ for all $k\in\Z $
and  $C_U =
\max_{k\in\Z}C_k = 0.0282  < \sqrt{A_h\lambda^{-1}}=0.0768$, with $\lambda$ as
defined in \eqref{eq:lambda}. Thus, by Proposition~\ref{thm:nonstationary_frames},   $\{ M_{lb_k} g_k\, : \, k,l\in\Z \}$ is a
Gabor frame with a lower frame bound $A=0.2$. 
\end{example}
\begin{remark}
Note that the construction presented in the Example~\ref{Ex1} is of particular interest for constructing   frequency-adaptive frames with windows that are compactly supported in time. This is a situation of considerable interest in applications, since it allows for real-time implementation with finite impulse response filters,~cp.~\cite{evdoma12}.
\end{remark}

In the second example we
construct a nonstationary Gabor frame by applying 
Corollary~\ref{cor:nonstationary_frames_2}. The windows of the new frame coincide with the windows of a painless frame on their support.  The windows in this example are constructed in analogy to the windows used in \emph{scale frames},  introduced in~\cite{badohojave11} to automatically improve the resolution of transients in audio signals. 

\begin{example}
As in the previous example, let $s_k \in \{-1,0,1\}$  with $\abs{s_k-s_{k-1}} \in \{0,1\}$ for all $k\in\Z$. We consider a sequence of windows
$g_k$ that are translated and dilated versions of the Gaussian window
$g(t) = e^{-\pi(2.5t)^2}$: $g_k(t) = T_{a_k}\sqrt{2^{s_k}}g(2^{s_k}t)$ 
with $a_0=0$ and
\begin{align*}
a_{k+1} &= a_k + 2^{-s_{k+1}-1} \,\quad \,\,\mbox{if} \quad s_k=s_{k+1}\,,\\
a_{k+1} &= a_k + \frac{1}{3}\cdot 2^{-s_{k+1}} \quad \mbox{if} \quad
s_k>s_{k+1}\,,\\
a_{k+1} &= a_k + \frac{1}{3}\cdot 2^{-s_k} \,\,\, \quad \,\mbox{if} \quad
\,\, s_k<s_{k+1}\,.
\end{align*}
Here, the $\{a_k\, : \, k\in\Z\}$ are separated
with minimum distance  $\delta = 1/4$.
We arrange the windows as follows: after each change of window size, no change is allowed in the next step; in other words, each window has at least one neighbor of the same size.  An example of the arrangement is shown in Figure~\ref{fig.arrangement}.

\begin{figure}[h]
\centerline{\includegraphics[width=\textwidth]{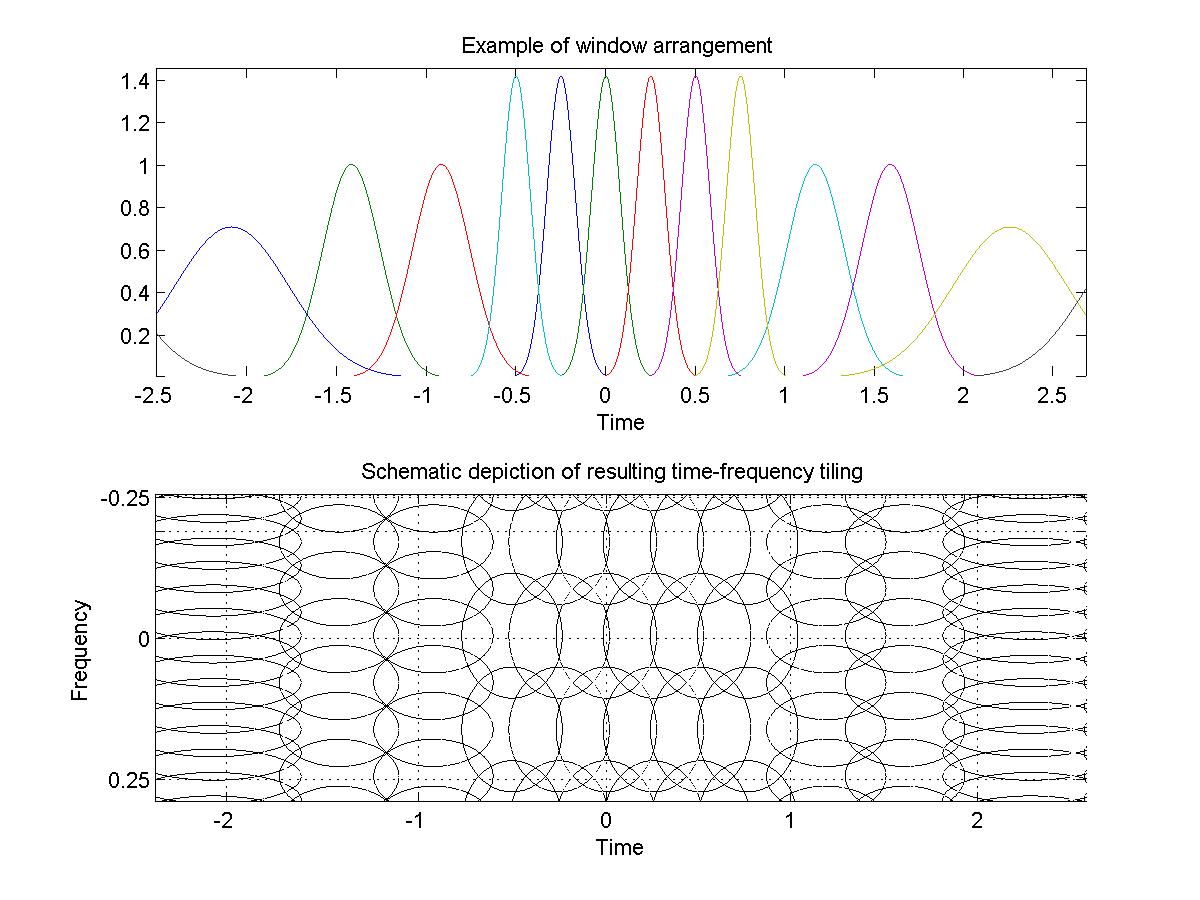}}
 	\caption{An example for the arrangement of dilated windows in Example 2}
	\label{fig.arrangement}
\end{figure}

Let $I_k=[a_k-2^{-s_k-1},a_k + 2^{-s_k-1}]$ and define a new set of windows
by $h_k(t) = g_k(t)\chi_{I_k}$. 
Then $\{M_{lb_k} h_k\, : \, k,l\in\Z\}$ with $b_k = 2^{s_k}$ is a painless nonstationary Gabor frame. 
By numerical calculations, its lower frame bound is $A_h = 0.1609$ and  $\psi_k(t) = g_k(t)-h_k(t)$
can be bounded by
\begin{equation*}
\abs{\psi_k(t)} \leq  \left \{ \begin{array}{ll}
\sqrt{2^{s_k}}g(1/2)(1+t-a_k-2^{-s_k-1})^{-19} & t > a_k+2^{-s_k-1}\\
    0 & t\in I_k \\
\sqrt{2^{s_k}}g(1/2)(1+a_k-2^{-s_k-1}-t)^{-19} & t < a_k-2^{-s_k-1}\,.
\end{array} \right.
\end{equation*}
From Proposition~{\ref{cor:nonstationary_frames_2}} it follows that
$4(\delta b_L^2)^{-1}\cdot C_U^2\cdot E_1 \cdot E_2 = 0.0071<	A_h$, and $\{ M_{lb_k} T_{a_k}g_k \, : \, k,l\in\Z\}$ is a
nonstationary Gabor frame with a lower frame bound  $A=0.1538$.
\end{example}

%

\section{Acknowledgement}
This work was supported by the WWTF project {\em Audiominer} (MA09-24) and  the Austrian Science Fund (FWF):[T384-N13] {\em Locatif}.


\begin{thebibliography}{10}

\bibitem{alcamo04-1}
A.~{A}ldroubi, C.~A. {C}abrelli, and U.~{M}olter.
\newblock {W}avelets on {I}rregular {G}rids with {A}rbitrary {D}ilation
  {M}atrices, and {F}rame {A}toms for {L}2({R}d).
\newblock {\em {A}ppl. {C}omput. {H}armon. {A}nal.}, {S}pecial {I}ssue on
  {F}rames {I}{I}.:119--140, 2004.

\bibitem{badohojave11}
P.~{B}alazs, M.~{D}{\"o}rfler, F.~{J}aillet, N.~{H}olighaus, and G.~A.
  {V}elasco.
\newblock {T}heory, implementation and applications of nonstationary {G}abor
  {F}rames.
\newblock {\em {J}. {C}omput. {A}ppl. {M}ath.}, 236:1481--1496, 2011.

\bibitem{CF03}
I.~{C}arrizo and S.~{F}avier.
\newblock {P}erturbation of wavelet and {{G}}abor frames.
\newblock {\em {A}nal. {T}heory {A}ppl.}, 19(3):238--254, 2003.


\bibitem{ch03}
O.~{C}hristensen.
\newblock {\em {A}n {I}ntroduction to {F}rames and {R}iesz {B}ases.}
\newblock {A}pplied and {N}umerical {H}armonic {A}nalysis. {B}irkh{\"a}user,
  2003.

\bibitem{chfazo01}
O.~{C}hristensen, S.~{F}avier, and F.~{Z}{\'o}.
\newblock {I}rregular wavelet frames and {G}abor frames.
\newblock {\em {A}pprox. {T}heory {A}ppl.}, 17(3):90--101, 2001.

\bibitem{chhe97}
O.~{C}hristensen and C.~{H}eil.
\newblock {P}erturbations of {B}anach frames and atomic decompositions.
\newblock {\em {M}ath. {N}achr.}, 185:33--47, {D}ecember 1997.

\bibitem{dagrme86}
I.~{D}aubechies, A.~{G}rossmann, and Y.~{M}eyer.
\newblock {P}ainless nonorthogonal expansions.
\newblock {\em {J}. {M}ath. {P}hys.}, 27(5):1271--1283, {M}ay 1986.


\bibitem{do11}
M.~{D}{\"o}rfler.
\newblock {Q}uilted {G}abor frames - {A} new concept for adaptive
  time-frequency representation.
\newblock {\em {A}dvances in {A}pplied {M}athematics}, 47(4):668 -- 687, {O}ct.
  2011.

\bibitem{dogrhove12}
N.~{H}olighaus, M.~{D}{\"o}rfler, G.~{V}elasco, and T.~{G}rill.
\newblock {A} framework for invertible, real-time constant-{Q} transforms.
\newblock {\em preprint}, submitted, http://www.univie.ac.at/nonstatgab/slicq,
2012. 



\bibitem{dusc52}
R.~J. {D}uffin and A.~C. {S}chaeffer.
\newblock {A} class of nonharmonic {F}ourier series.
\newblock {\em {T}rans. {A}mer. {M}ath. {S}oc.}, 72:341--366, 1952.

\bibitem{evdoma12}
G.~{E}vangelista, M.~{D}{\"o}rfler, and E.~{M}atusiak.
\newblock {P}hase {V}ocoders {W}ith {A}rbitrary {F}requency {B}and {S}election.
\newblock {\em {P}roceedings of the 9th {S}ound and {M}usic {C}omputing
{C}onference, {J}uly 11-14th 2012 {K}openhagen}, 2012. 

\bibitem{faza95}
S.~J. {F}avier and R.~A. {Z}alik.
\newblock {O}n the stability of frames and {R}iesz bases.
\newblock {\em {A}ppl. {C}omput. {H}armon. {A}nal.}, 2(2):160--173, 1995.

\bibitem{fe81-3}
H.~G. {F}eichtinger.
\newblock {O}n a new {S}egal algebra.
\newblock {\em {M}onatsh. {M}ath.}, 92:269--289, 1981.


\bibitem{ga46}
D.~{G}abor.
\newblock {T}heory of communication.
\newblock {\em {J}. {I}{E}{E}}, 93(26):429--457, 1946.

\bibitem{gr01}
K.~{G}r{\"o}chenig.
\newblock {\em {F}oundations of {T}ime-{F}requency {A}nalysis}.
\newblock {A}ppl. {N}umer. {H}armon. {A}nal. {B}irkh{\"a}user {B}oston, 2001.

\bibitem{G04}
K.~{G}r{\"o}chenig.
\newblock {\em {L}ocalization of {F}rames, {B}anach {F}rames, and the
  invertibility of the frame operator}.
\newblock {P}roceedings of {S}{P}{I}{E}, {S}an {D}iego, 2004.

\bibitem{ma92-2}
H.~{M}alvar.
\newblock {\em {S}ignal {P}rocessing with {L}apped {T}ransforms}.
\newblock {B}oston, {M}{A}: {A}rtech {H}ouse. xvi, 1992.

\bibitem{ro11}
J.~L. {R}omero.
\newblock {S}urgery of spline-type and molecular frames.
\newblock {\em {J}. {F}ourier {A}nal. {A}ppl.}, 17:135 --– 174, 2011.

\bibitem{rosh97}
A.~{R}on and Z.~{S}hen.
\newblock {W}eyl-{H}eisenberg frames and {R}iesz bases in
  ${L}_2(\mathbb{R}^d)$.
\newblock {\em {D}uke {M}ath. {J}.}, 89(2):237--282, 1997.
\bibitem{ru66}
W.~{R}udin.
\newblock {\em {R}eal and {C}omplex {A}nalysis}.
\newblock {M}c{G}raw-{H}ill {B}ook {C}ompany, {N}ew {Y}ork, 1966. 
\bibitem{suzh02}
W.~{S}un and X.~{Z}hou.
\newblock {I}rregular wavelet/{G}abor frames.
\newblock {\em {A}ppl. {C}omput. {H}armon. {A}nal.}, 13(1):63--76, 2002.

\bibitem{dohove11}
G.~A. {V}elasco, N.~{H}olighaus, M.~{D}{\"o}rfler, and T.~{G}rill.
\newblock {C}onstructing an invertible constant-{Q} transform with
  non-stationary {G}abor frames.
\newblock In {\em {P}roceedings of {D}{A}{F}{X}11, {P}aris}, September 2011.

\bibitem{wa92}
D.~F. {W}alnut.
\newblock {C}ontinuity properties of the {G}abor frame operator.
\newblock {\em {J}. {M}ath. {A}nal. {A}ppl.}, 165(2):479--504, 1992.

\bibitem{maxi01}
Z.~{X}iong and H.~{M}alvar.
\newblock {A} nonuniform modulated complex lapped transform.
\newblock {\em {I}{E}{E}{E} {S}ignal {P}rocessing {L}etters}, 8(9):257--260,
  {S}eptember 2001.

\end{thebibliography}

\end{document}